\documentclass[12pt,oneside,reqno]{amsart}
\usepackage{graphicx}
\usepackage{amsfonts,amssymb,amscd,amsmath,enumerate,verbatim,calc}
\usepackage[T1]{fontenc}
 \usepackage[utf8]{inputenc}
\usepackage{float}

\newtheorem{Theorem}{Theorem}[section]
\newtheorem{Lemma}[Theorem]{Lemma}
\newtheorem{Corollary}[Theorem]{Corollary}
\newtheorem{Proposition}[Theorem]{Proposition}
\newtheorem{Observation} [Theorem] {Observation}
\newtheorem{Remark}[Theorem]{Remark}

\textwidth=14cm \textheight=21cm \topmargin=0.5cm
\oddsidemargin=0.5cm \evensidemargin=0.5cm \pagestyle{plain}
\begin{document}
\title{ON THE ZERO FORCING NUMBER OF CORONA and lexicographic product of GRAPHS}
\author{I. Javaid*, I. Irshad, M. Batool, Z. Raza }
\keywords{Zero forcing number, zero forcing sets, corona
product of graphs, lexicographic product of graphs. \\
\indent 2010 {\it Mathematics Subject Classification.} 05C50\\
\indent $^*$ Corresponding author: imran.javaid@bzu.edu.pk}

\address{Centre for advanced studies in Pure and Applied Mathematics,
Bahauddin Zakariya University Multan, Pakistan\newline Email:
imran.javaid@bzu.edu.pk, iqrairshad9344@gmail.com,
m.batool12@yahoo.com, zahidsms@gmail.com}
\date{}
\maketitle
\begin{abstract} The zero forcing number of a graph $G$, denoted by $Z(G)$, is the minimum cardinality of a set $S$ of black
vertices (where vertices in $V(G)\setminus S$ are colored white)
such that $V(G)$ is turned black after finitely many applications of
$``$the color change rule$"$: a white vertex is turned black if it
is the only white neighbor of a black vertex. In this paper, we
study the zero forcing number of corona product, $G\odot H$ and
lexicographic product, $G\circ H$ of two graphs $G$ and $H$. It is
shown that if $G$ and $H$ are connected graphs of order $n_{1}\geq2$
and $n_{2}\geq2$ respectively, then $Z(G\odot ^{k}H)=Z(G\odot
^{k-1}H)+n_{1}(n_{2}+1)^{k-1}Z(H)$, where
$G\odot^{k}H=(G\odot^{k-1}H)\odot H$. Also, it is shown that for a
connected graph $G$ of order $n\geq 2$ and an arbitrary graph $H$
containing $l\geq 1$ components $H_{1},H_{2}, \cdots,H_{l}$ with
$|V(H_{i})|=m_{i}\geq 2$, $1\leq i\leq l$,
$(n-1)l+\sum\limits_{i=1}^l m_{i}\leq Z(G\circ H)\leq
n(\sum\limits_{i=1}^{l}m_{i})-l$.

\end{abstract}

 \textbf{\section{\textbf{Introduction}}}
 Let $G=(V(G),E(G))$
be a simple, undirected, connected graph with $|V(G)|\geq 2$. The
number of vertices and edges of $G$ are called the order and the
size of $G$ respectively. The degree of a vertex $v\in V$, denoted
by $deg_{G}(v)$, is the number of edges incident to the vertex $v$
in $G$. If there is no ambiguity, we will use the notation $deg(v)$
instead of $deg_{G}(v)$. An end vertex is a vertex of degree one.
Given $u,v\in V$, $u\sim v$ means that $u$ and $v$ are adjacent
vertices and $u\nsim v$ means that $u$ and $v$ are not adjacent. We
define the open neighborhood of a vertex $v$ in $G$,
$N_{G}(v)=\{u\in V(G):u\sim v\}$ and the closed neighborhood of $v$,
$N_{G}[v]=N_{G}(v)\cup\{v\}$. If there is no ambiguity, we will
simply write $N(v)$ or $N[v]$. If $u\in N_{G}(v)$ then $u$ is said
to be a neighbor of $v$. We denote a path, cycle, complete graph and
empty graph on $n$ vertices by $P_{n}$, $C_{n}$, $K_{n}$ and
$\overline{K_{n}}$ respectively. All graphs considered in this paper
are non trivial unless otherwise stated.
 \hfill \break
\indent The notion of a zero forcing set, as well as the associated
zero forcing number, of a simple graph was introduced in
\cite{BBBCCFGHHMNPS} to bound the minimum rank of associated
matrices for numerous families of graphs. Let each vertex of a graph
$G$ be given one of two colors, $``black"$ and $``white"$ by
convention. Let $S$ denote the initial set of black vertices of $G$.
The color-change rule converts the color of a vertex $u_{2}$ from
white to black if the white vertex $u_{2}$ is the only white
neighbor of a black vertex $u_{1}$; we say that $u_{1}$ forces
$u_{2}$, which we denote by $u_{1} \rightarrow u_{2}$. And a
sequence, $u_{1} \rightarrow u_{2}\rightarrow\cdots \rightarrow
u_{i}\rightarrow u_{i+1}\rightarrow \cdots \rightarrow u_{t},$
obtained through iterative applications of the color-change rule is
called a forcing chain. The set $S$ is said to be a zero forcing set
of $G$ if all the vertices of $G$ will be turned black after
finitely many applications of the color-change rule. The zero
forcing number of $G$, denoted by $Z(G)$, is the minimum of $|S|$
over all zero forcing sets $ S \subseteq V(G)$. A zero forcing set
of cardinality $Z(G)$ is called a forcing basis for $G$. For surveys
on the zero forcing parameter, see \cite{FH, FHR}. For more on the
zero forcing parameter in graphs, see \cite{BBFHHSDH, BFHRS, CDKY,
EHHLR, EKY, HHKMWY}.
\par If $F$ is a field, $M_{n}(F)$ denotes the set of all $n\times n$ matrices
over F. An $n$-square matrix $A$ is said to be a symmetric matrix if
$A^{T}=A$. The set of all real symmetric $n$-square matrices is
denoted by $S_{n}$. To a given graph $G$ with vertex set
$\{1,2,\cdots,n\}$, we associate a class of real, symmetric matrices
as follows:
$$S(G)=\{A=[a_{ij}]|A \in S_{n},\, \mbox{for}\,\,\, i\neq
j,a_{ij}\neq0\Leftrightarrow ij \in E(G)\}.$$ Note that there is no
restriction on the value of $a_{ii}$ with $i=1,2,\cdots,n$ and the
adjacency matrix $A(G)$ belongs to $S(G)$, where the adjacency
matrix of a graph $G$ is a square $(0,1)$-matrix of size $n$, whose
$(i,j)$-th entry is $1$ if and only if $v_{i}$ is adjacent to
$v_{j}$, since there are no loops in the graph, the diagonal entries
of the adjacency matrix are zero. On the other hand, the graph of an
$n$-square symmetric matrix $A$, denoted by $\mathcal{G}(A)$, is the
graph with vertices $\{1,2,\cdots,n\}$ and the edge set
$$\{ij|a_{ij}\neq0, 1\leq i\neq j\leq n\}.$$ The minimum rank of $G$ is defined to be
$$mr(G)=min\{rank(A)|A \in S(G)\},$$ while the maximum nullity of $G$ is defined
as $$M(G)=max\{null(A)|A \in S(G)\}.$$  We have
$$mr(G)+M(G)=|V(G)|.$$

 The underlying idea for the zero forcing set of a graph is that a black vertex is associated with a coordinate in a vector
that is required to be zero, while a white vertex indicates a
coordinate that can be either zero or nonzero. Changing a vertex
from white to black is essentially noting that the corresponding
coordinate is forced to be zero if the vector is in the kernel of a
matrix in $S(G)$ and all black vertices indicate coordinates assumed
to be or previously forced to be $0$. Hence the use of the term
``zero forcing set", see \cite{BBBCCFGHHMNPS}.

\par The support of a vector $x =(x_{i})$, denoted by $Supp(x)$, is
the set of indices $i$ such that $x_{i}\neq0$.
 Let $Z$ be a zero forcing set of $G$ and $A\in S(G)$. If $x\in
null(A)$ and
 $Supp(x)\cap Z=\phi $, then $x=0$, stated in \cite{BBBCCFGHHMNPS, YKML}.
Also from \cite{BBBCCFGHHMNPS, YKML}, we have $M(G)\leq Z(G)$ for a
graph $G$.
\par In this paper, we consider corona product and lexicographic product of graphs in the context of zero forcing
number. This paper consists of three sections. Section $1$ includes
introduction. Sections $2$ and $3$ include several results related
to the zero forcing number of corona and lexicographic product of
graphs, respectively.
\section{Corona Product of Graphs}
 \indent Let $G$ and $H$ be
two graphs of order $n_{1}$ and $n_{2}$ respectively. The corona
product of $G$ and $H$ is defined as the graph obtained from $G$ and
$H$ by taking one copy of $G$ and $n_{1}$ copies of $H$ and joining
by an edge each vertex from the $i^{th}$-copy of $H$ with the
$i^{th}$-vertex of $G$. We will denote by
$V=\{v_{1},v_{2},\cdots,v_{n_{1}}\}$, the set of vertices of $G$ and
by $H_{i}=(V_{i},E_{i})$, the $i$-th copy of $H$, where
$V_{i}=\{u^i_{1},u^i_{2},\cdots, u^i_{n_{2}}\}$, such that
$v_{i}\sim u^i_{k}$ for every $u^i_{k}\in V_{i}$. Note that the
subgraph of $G \odot H$ induced by $V_i$ is $H_i$ and the corona
graph $K_{1}\odot H$ is isomorphic to the join graph $K_{1}+H$. For
any integer $k\geq2$, we define the graph $G\odot^{k} H$ recursively
from $G\odot H$ as $G\odot^{k} H=(G\odot^{k-1} H)\odot H$. It is
also noted that $|G\odot ^{k-1}H|=n_{1}(n_{2}+1)^{^{k-1}}$ and
$|G\odot ^{k}H|=|G\odot ^{k-1}H|+n_{1}n_{2}(n_{2}+1)^{^{k-1}}.$
\par We call the copies of $H$ in $G\odot H$ as the copies of $H$ in
$1st$-corona, the newly added copies of $H$ in $G\odot H$ to obtain
$G\odot^{2} H$ as the copies of $H$ in $2nd$-corona and generally
the newly added copies of $H$ in $G\odot^{k-1} H$ to obtain
$G\odot^{k} H$ as the copies of $H$ in $k^{th}$-corona.

In $G\odot^{k} H$ for any positive integer $k$, we name the vertices
in $G\odot^{k-1} H$ as the root vertices of the copies of $H$ in
$k^{th}$-corona, that are joined to these vertices in $G\odot^{k}
H$. \par As one can color the vertices of $G\odot^{k} H$ in more
than one ways, but in this paper for a disconnected graph
$H$(containing isolated vertices) of order at least two, we will
consider the zero forcing set of $G\odot^{k} H$, that contains only
the vertices of $H$ but not the vertices of $G$.

In Figure \ref{new123}, the graph with grey vertices is $G=P_2$, the
copies of $H$ with black vertices are the copies of $H$ in first
corona and with white vertices are the copies of $H$ in 2nd corona.
The black and grey vertices are the root vertices of the
corresponding copies of $H$ with white vertices.
\begin{figure}[!ht]
   \centerline
     {\includegraphics[width=8cm]{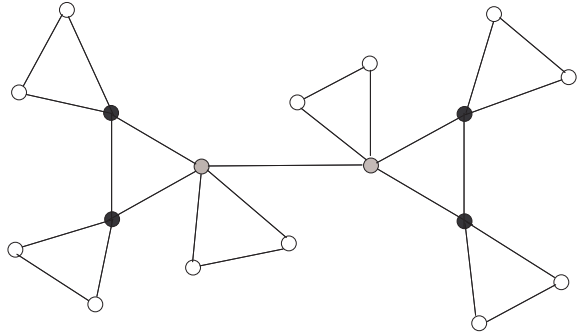}}
      \caption{$P_2\odot ^2 P_2$}\label{new123}
\end{figure}

We first recall the useful result obtained in \cite{EKE}.
\begin{Proposition}\label{manha} \cite{EKE} Let $G$ be a connected graph of order $n\geq
2$. Then
\\$(a)$ $Z(G)=1$ if and only if $G=P_{n},$
\\$(b)$ $Z(G)=n-1$ if and only if $G=K_{n}.$ \label{u}
\end{Proposition}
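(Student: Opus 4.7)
The plan is to prove each biconditional by handling both directions, with the ``only if'' directions being the more substantive ones. Throughout, I will use the fact that $Z(G)\geq 1$ for any graph with at least one vertex, since the empty initial set never enables forcing.

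For part $(a)$, the easy direction is that $P_n$ admits a forcing chain starting at either endpoint: that endpoint has a unique neighbor, which after being forced has a unique remaining white neighbor, and so on down the path. This gives $Z(P_n)\leq 1$, hence equality. For the converse, suppose $Z(G)=1$ and let $v_1$ be the single initial black vertex; the forcing process then produces a chain $v_1\to v_2\to\cdots\to v_n$ covering all vertices. I will show by induction on $i$ that the only neighbors of $v_i$ are $v_{i-1}$ and $v_{i+1}$ (with the obvious boundary conventions). The key observation is that when $v_i$ forces $v_{i+1}$, every vertex in $\{v_{i+2},\dots,v_n\}$ is still white, so $v_i$ cannot be adjacent to any of them; combined with the inductive hypothesis that earlier $v_j$'s also avoid later vertices, this pins down the neighborhood of every $v_i$ and forces $G$ to be exactly the path $v_1v_2\cdots v_n$.

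For part $(b)$, the easy direction is that in $K_n$ any black set of size $n-2$ leaves two white vertices, each adjacent to every black vertex, so every black vertex has two white neighbors and no forcing is possible; hence $Z(K_n)\geq n-1$, and $n-1$ vertices clearly suffice. For the converse, I will assume $G$ is not complete and exhibit a zero forcing set of size $n-2$. Pick non-adjacent vertices $u,v$, and using connectedness of $G$ pick any neighbor $w$ of $u$; then the candidate set is $S=V(G)\setminus\{v,w\}$. Since $u\in S$ is not adjacent to $v$, its only white neighbor is $w$, so $u$ forces $w$; thereafter $v$ is the unique white vertex, and any of its (now black) neighbors forces it.

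The main subtlety I anticipate lies in part $(a)$: one must carefully extract a non-adjacency constraint from \emph{each} step of the forcing chain in order to rule out ``extra'' edges from $v_i$ to vertices further along the chain, rather than from a single step. Part $(b)$ is structurally cleaner once one realizes that choosing the two white vertices adaptively, as $v$ together with a neighbor $w$ of $u$, sidesteps the otherwise awkward case $N(u)=N(v)$ that would obstruct the naive choice $S=V(G)\setminus\{u,v\}$.
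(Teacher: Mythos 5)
The paper offers no proof of this proposition: it is recalled verbatim from the cited reference \cite{EKE} and used as a black box, so there is nothing internal to compare your argument against. Taken on its own terms, your proof is correct and complete in outline. In part $(a)$ the one step you leave implicit is why the vertex performing the $i$-th force must be $v_i$ rather than some earlier $v_j$: this follows from your own inductive hypothesis, since $N(v_j)=\{v_{j-1},v_{j+1}\}$ for $j<i$ means every earlier vertex already has all its neighbors black, so only the most recently forced vertex can act; once that is said, the containment $N(v_i)\subseteq\{v_1,\dots,v_{i-1},v_{i+1}\}$ from the color-change rule combines with the non-adjacencies inherited from earlier stages to give $N(v_i)=\{v_{i-1},v_{i+1}\}$ exactly as you describe. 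In part $(b)$ your adaptive choice $S=V(G)\setminus\{v,w\}$ with $u\not\sim v$ and $w\in N(u)$ is the right move, and your remark about $C_4$-type obstructions to the naive choice $V(G)\setminus\{u,v\}$ is accurate; note only that the non-complete case forces $n\geq 3$, so $|S|=n-2\geq 1$ and the argument never degenerates. In short: a valid elementary proof of a result the paper merely imports.
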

Note that for a connected graph $G$ of order $n$, we have
\begin{equation}
1\leq Z(G)\leq n-1.  \label{eqz}
\end{equation}%
\begin{Lemma}
Let $G$ be a connected graph of order $n \geq 2$ and let $H$ be a
graph of order at least two. Let $H_{i}$ be the subgraph of $G \odot
H$ corresponding to the $i^{th}$-copy of
$H$.\\
 $(i)$ If $S$ is a zero forcing set of $G \odot H$, then $V_{i} \cap S \neq
 \phi$ for every $i \in \{1,...,n\}.$\\
 $(ii)$ If $H$ is a connected graph and $S$ is a zero forcing set
 of $G \odot H$, then for every $i \in \{1,...,n\}$, $S \cap V_{i}$
 is a zero forcing set of $H_{i}$.
\end{Lemma}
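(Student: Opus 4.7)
The plan is to exploit the single structural fact that the only edge from a vertex of $V_i$ to $V(G\odot H)\setminus V_i$ is the edge to $v_i$. Both parts of the lemma follow by tracking, step by step, which vertices could possibly force a vertex of $V_i$.

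For part $(i)$, I would argue by contradiction: assume $V_i\cap S=\emptyset$ and show by induction on the step number of the color-change process that $V_i$ remains entirely white throughout. At each step, a force $w\to u$ with $u\in V_i$ would require $w\in N_{G\odot H}(u)\subseteq V_i\cup\{v_i\}$. By the induction hypothesis $V_i$ is still white, so the only candidate forcer is $w=v_i$. But $v_i$ is adjacent to all $n_2\geq 2$ white vertices of $V_i$, hence has at least two white neighbors and cannot force. Thus no vertex of $V_i$ is ever blackened, contradicting that $S$ is a zero forcing set.

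For part $(ii)$, assume $H$ (and hence each $H_i$) is connected. From a successful forcing sequence on $G\odot H$ started at $S$, extract the sub-sequence consisting of those forces whose forced vertex lies in $V_i$; I will show that, after possibly replacing some forcers, this sub-sequence is a valid forcing sequence in $H_i$ starting from $S\cap V_i$. The setup is an induction on the length of the sub-sequence: just before its $k$-th force, the black vertices of $H_i$ agree with the black vertices of $V_i$ in the $G\odot H$ process at the corresponding moment. For the inductive step, consider a force $w\to u$ with $u\in V_i$; necessarily $w\in V_i\cup\{v_i\}$. If $w\in V_i$, then $v_i$ must already be black, so the only white neighbor of $w$ in $H_i$ is again $u$, and the same force is legal in $H_i$. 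If $w=v_i$, then every vertex of $V_i\setminus\{u\}$ is black, so in the $H_i$-process only $u$ is white at this moment; using connectedness of $H_i$ together with $|V_i|\geq 2$, pick any neighbor $w'\in V_i$ of $u$ in $H_i$, which is black and has $u$ as its unique white neighbor in $H_i$, and use the force $w'\to u$ as the replacement.

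The main obstacle is exactly the case $w=v_i$: the forcer lies outside $V_i$ and has no direct analogue inside $H_i$. It is precisely the connectedness of $H$ (which guarantees that the unique white vertex $u$ of $H_i$ has some black neighbor in $V_i$) that allows the substitution of $w'$ as a replacement forcer, and this is also where the hypothesis in $(ii)$ is used.
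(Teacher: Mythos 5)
Your proof is correct, and for part $(i)$ it is essentially the paper's argument made explicit: the paper simply notes that $v_i$ cannot force into $V_i$ because it has more than one white neighbor there, while you add the (needed, if routine) induction showing that no vertex of $V_i$ can ever be the forcer either, since $V_i\cup\{v_i\}$ exhausts the neighborhood of any vertex of $V_i$. For part $(ii)$ your route is genuinely different and, in fact, more complete. The paper argues by contradiction: if $S\cap V_i$ were not a zero forcing set of $H_i$, the $H_i$-process would stall at a black vertex with two or more white neighbors, and this vertex also has two or more white neighbors in $G\odot H$, ``a contradiction.'' That argument tacitly assumes that the $G\odot H$-process restricted to $V_i$ can do no more than the $H_i$-process on $S\cap V_i$, but it never addresses the one mechanism by which it could do more, namely a force $v_i\to u$ with $u\in V_i$ performed by the root vertex from outside the copy. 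You isolate exactly this case and dispose of it by the replacement force $w'\to u$, observing that when $v_i$ forces, all of $V_i\setminus\{u\}$ is already black, so connectedness of $H_i$ supplies a black $H_i$-neighbor $w'$ of $u$ whose unique white $H_i$-neighbor is $u$. This is also the only place the hypothesis that $H$ is connected enters, which your simulation argument makes visible (and which explains why $(ii)$ fails for, say, $H=\overline{K_2}$), whereas the paper's contradiction argument leaves both the gap and the role of connectedness implicit. In short: the paper's proof is shorter but elides the critical case; yours is a step-by-step simulation with an explicit repair of that case, at the cost of a slightly longer induction.
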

\begin{proof}
 $(i)$ Suppose $V_{i}\cap S=\phi$, for some $i$. Then the vertex $v_{i}\in V$, initially black or forced to black in the zero forcing process,
  will not force any vertex in $V_{i}$ to turn black
because it has more than one white neighbors, a contradiction.
 \\$(ii)$ Suppose contrary that $S\cap V_{i}$ is not a zero forcing set for $H_{i}$.
 Then there exists a black vertex say $u^i_{l}\in V_{i}$, that has more
 than one white neighbors in graph $H_{i}$, so no forcing situation can occur. Note that the vertex $u^i_{l}\in V_{i}$ also has  more
 than one white neighbors in $G \odot H$, a
 contradiction.
\end{proof}
\begin{figure}[!ht]
      \centerline
      {\includegraphics[width=8cm]{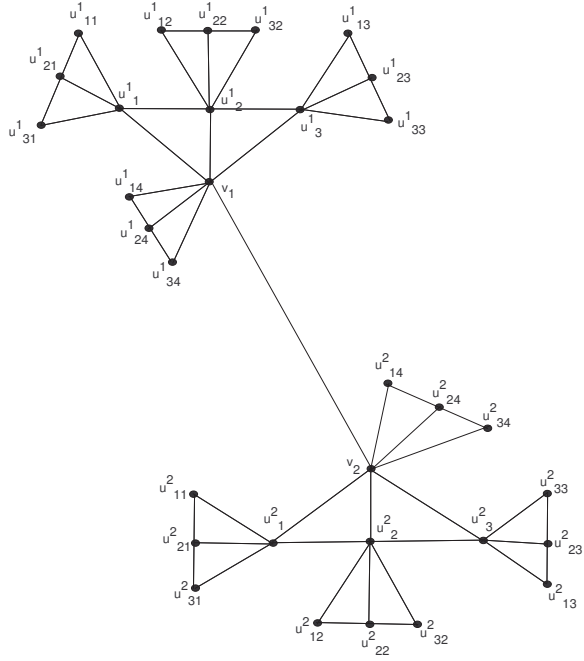}}
        \caption{Graph $P_{2}\odot^{2} P_{3}$}\label{must}
\end{figure}
Here we introduce some terminology related to the following theorem.
We assume that $B^{\prime} = \{v_{1},v_{2},...,v_{t}\}$ is a forcing
basis for $G$ and by using $B^{\prime}$ one can color all the other
vertices of $G$ by a sequence of forces in the following order:
$v_{t+1},v_{t+2},...,v_{n_{1}}$, with appropriate indexing of
vertices. We denote the vertices of $i$-th copies of $H$ in $l$-th
corona ($1\leq l\leq k$) by $u^i_{j_1j_2\cdots j_l}$, where $1\leq
j_1\leq n_2$ and $1\leq j_p\leq n_2+1$ for each $2\leq p\leq l$.
Assume that $Z(H)=m$ and a forcing basis for $i^{th}$-copy $H_{i}$
of $H$ in $1st$-corona is denoted by $ B^{i}=
\{u^i_{1},u^i_{2},\ldots,u^i_{m}\}$ and forcing basis for $i$-th
copies of $H$ in $l^{th}$-corona($2\leq l\leq k$) by $
B^i_{j_2\cdots j_l}=
\{u^i_{1j_2j_3...j_l},u^i_{2j_{2}j_{3}...j_{l}},\ldots,u^i_{mj_2j_3...j_l}\}$,
where $1\leq j_2, \cdots, j_l\leq n_{2}+1$. We denote the collection
of forcing basis of all copies of $H$ in first corona by $B_1$ i.e
$\cup_{i=1}^{n_1}B^i=B_1$, similarly the collection of forcing basis
of all copies of $H$ in $l$-th corona by $B_l$ i.e
$\cup_{i=1}^{^{n_{1}}}(\cup_{j_l=1}^{^{n_{2}+1}}\cdots
(\cup_{j_2=1}^{^{n_{2}+1}}B^i_{j_2 \cdots j_{l}}))=B_l$ and
$|B_l|=n_1(n_2+1)^{l-1}Z(H)$. FIGURE \ref{must} helps in
understanding the indices as mentioned.
\begin{Theorem} \label{api}
Let $G$ and $H$ be connected graphs of order $n_{1}$ and $n_{2}$
respectively, then $$Z(G\odot ^{k}H)=Z(G\odot
^{k-1}H)+n_{1}(n_{2}+1)^{k-1}Z(H).$$
\end{Theorem}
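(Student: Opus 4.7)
The plan is to prove the two opposite inequalities separately, viewing $G\odot^{k}H$ as $(G\odot^{k-1}H)\odot H$ so that the $k$-th corona copies of $H$ are the ``outer'' copies.

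\textbf{Upper bound.} I would exhibit a zero forcing set of size $Z(G\odot^{k-1}H)+n_{1}(n_{2}+1)^{k-1}Z(H)$. Let $B^{*}$ be a forcing basis of $G\odot^{k-1}H$ and let $B_{k}$ (in the notation preceding the theorem) be the union of the forcing bases of the $n_{1}(n_{2}+1)^{k-1}$ copies of $H$ in the $k$-th corona; I would claim $S=B^{*}\cup B_{k}$ is a zero forcing set and verify this via an interleaved forcing procedure. Whenever a root vertex $w\in V(G\odot^{k-1}H)$ becomes black, the forcing basis of its attached $k$-th corona copy $H_{w}$ can then propagate freely inside $H_{w}$ (since $w$, the only outside neighbour of each vertex of $H_{w}$, is now black), turning all of $H_{w}$ black; afterwards $w$ has no white neighbours in $H_{w}$, so the $B^{*}$-chain in $G\odot^{k-1}H$ advances one step and blackens the next root. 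Alternating, all of $G\odot^{k}H$ is eventually blackened.

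\textbf{Lower bound.} Let $S$ be any zero forcing set of $G\odot^{k}H$. Applying the preceding Lemma with $G$ replaced by $G\odot^{k-1}H$ gives $|S\cap V'|\ge Z(H)$ for every $k$-th corona copy $V'$. Let $R$ be the set of roots $v\in V(G\odot^{k-1}H)\setminus S$ that are forced in the $S$-process by a vertex of the corona copy attached to $v$, and set $T=(S\cap V(G\odot^{k-1}H))\cup R$. I would first show that $T$ is a zero forcing set of $G\odot^{k-1}H$: every vertex of $G\odot^{k-1}H$ either lies in $T$ or is forced in the $S$-process by a neighbour inside $G\odot^{k-1}H$; for each such inside force $x\to w$, since $w$ is the only white $G\odot^{k}H$-neighbour of $x$, it is also the only white $G\odot^{k-1}H$-neighbour, so the same force is legal in the $T$-process. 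Replaying these forces in the order in which they appear in the $S$-process yields $Z(G\odot^{k-1}H)\le|T|$.

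\textbf{Key technical step.} The heart of the argument is to show that $|S\cap V'|\ge Z(H)+1$ whenever the root $v$ of $V'$ lies in $R$; this is the step I expect to be the main obstacle. While $v$ is white, every vertex of $V'$ has $v$ as a white neighbour, so no force with target in $V'$ can occur. Consequently, the set of black vertices of $V'$ at the moment $v$ is forced equals $S\cap V'$, and the forcer $u^{*}\in S\cap V'$ must satisfy $N_{H}[u^{*}]\subseteq S\cap V'$. To gain the extra vertex, pick $v^{*}\in N_{H}(u^{*})$ (which exists because $H$ is connected with $n_{2}\ge2$) and argue that $(S\cap V')\setminus\{v^{*}\}$ is already a zero forcing set of $H$: inside $H$ the vertex $u^{*}$ has $v^{*}$ as its only white neighbour, so the force $u^{*}\to v^{*}$ restores the black set to $S\cap V'$, which zero-forces $H$ by the Lemma. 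Hence $|S\cap V'|-1\ge Z(H)$, and therefore $|R|\le\sum_{V'}(|S\cap V'|-Z(H))$ gives $|T|\le|S|-n_{1}(n_{2}+1)^{k-1}Z(H)$, yielding the desired lower bound.
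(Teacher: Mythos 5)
Your proof is correct. The upper bound is the same construction the paper uses: the union of a forcing basis of $G\odot^{k-1}H$ with forcing bases of all copies of $H$ in the $k$-th corona, blackened by interleaving the inner forcing chain with the outer copies (the paper packages this inside an induction on $k$ so as to name the inner basis explicitly as $B^{\wp}=B^{\prime}\cup B_1\cup\cdots\cup B_{k-1}$, but that induction is not needed for the recurrence itself). Where you genuinely depart from the paper is the lower bound. The paper only asserts that ``at least $Z(H)$ vertices are required in each copy of $H$ in the $k$-th corona to continue the zero forcing process'' and, for $k=1$, that the copies attached to basis vertices of $G$ need $Z(H)+1$; it never explains how these local requirements combine with the forcing inside $G\odot^{k-1}H$ into a global count. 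Your argument supplies exactly the missing bookkeeping: you project the forcing process of an arbitrary zero forcing set $S$ onto $G\odot^{k-1}H$ by adjoining the set $R$ of roots that are forced from inside their attached copies, verify that $\bigl(S\cap V(G\odot^{k-1}H)\bigr)\cup R$ forces $G\odot^{k-1}H$ by replaying the inner forces, and pay for each vertex of $R$ with the extra $(Z(H)+1)$-st vertex its copy must contain. Your observation that the forcer $u^{*}$ of such a root satisfies $N_{H}[u^{*}]\subseteq S\cap V'$, so that $(S\cap V')\setminus\{v^{*}\}$ still forces $H$ after the single force $u^{*}\to v^{*}$, is precisely the point the paper glosses over, and it is where the hypotheses that $H$ is connected with $n_{2}\geq 2$ are actually used. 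The price is a longer argument; what it buys is a genuine proof of the inequality $Z(G\odot^{k}H)\geq Z(G\odot^{k-1}H)+n_{1}(n_{2}+1)^{k-1}Z(H)$, which in the paper is the weakest link.
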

\begin{proof}
We prove the result by mathematical induction. \label{me}
For $k=1,$ we have to show that%
\begin{equation}
Z(G\odot H)=Z(G)+n_{1}Z(H).  \label{k=1}
\end{equation}%
\\First, we show that%
\begin{equation}
Z(G\odot H)\leq Z(G)+n_{1}Z(H).  \label{1}
\end{equation}%
We define $B = B^{\prime} \cup B_{1}$ and $|B| = Z(G) + n_{1}Z(H)$.
We claim that $B$ is a zero forcing set of $G \odot H$. To prove the
claim, first assume that $B$ is initially
 colored black and we color all the vertices of $H_{i}$ with
 $1 \leq i \leq t$ which are associated with the vertices of $B^{\prime}$
 using the corresponding sets $B^{i}$. Now all the vertices in $H_{i}$ associated with $v_{i}$, $1 \leq i \leq t$ are colored black.
 Note that there is a vertex $v_{i}$ belonging to $B^{\prime}$ that has
 only one white neighbor $v_{t+1}$. Thus $v_i\rightarrow v_{t+1}$. Then we color all the
 vertices in $H_{t+1}$ by using the black vertices in $B^{t+1}$.
 Continuing this process, we can color all the vertices of $G \odot
 H$.
\par Note that the degree of each vertex of $G$ is increased by
$n_{2}$ and the degree of each vertex of $H$ is increased by $1$ in
$G\odot H$. Let $v_{i}\in B^{\prime}$ and consider the corresponding
copy $H_{i}$ of $H$. Note that at least $Z(H)+1$ vertices are
required as initially colored black to start the zero forcing
process in each of these $H_{i}$'s, $1\leq i\leq t$. Then
$v_{t}\rightarrow v_{t+1}$ and to continue the process at least
$Z(H)$ more vertices are required in $H_{t+1}$ as initially colored
black. Continue the process until all the vertices are turned black.
Hence,
\begin{equation}
Z(G\odot H)\geq Z(G)+n_{1}Z(H).  \label{2}
\end{equation}%
By (\ref{1}) and (\ref{2}), (\ref{k=1}) holds.
\\Suppose that the result is true for $k-1$, i.e
\begin{equation}
Z(G\odot ^{k-1}H)=Z(G\odot ^{k-2}H)+n_{1}(n_{2}+1)^{k-2}Z(H).
\label{if}
\end{equation}%
Now we have to show that the result is true for $k$, i.e
\begin{equation}
Z(G\odot ^{k}H)=Z(G\odot ^{k-1}H)+n_{1}(n_{2}+1)^{k-1}Z(H).
\label{v}
\end{equation}%
We define $B^{\wp}=B^{\prime}\cup B_1\cup B_2\cup \cdots \cup
B_{(k-1)}$ and $|B^{\wp}|=Z(G)+n_{1}Z(H)+n_{1}(n_{2}+1)Z(H)+\cdots
+n_{1}(n_{2}+1)^{k-2}Z(H)=\alpha.$ Then $B^{\wp}$ is a zero forcing
set of $G\odot^{k-1} H$ by (\ref{if}). Therefore, we color all the
vertices of $G\odot^{k-1} H$ using $B^{\wp}$. Now all the vertices
from $1$st-corona to $(k-1)^{th}$-corona in $G\odot ^{k}H$ are
colored black. It suffices to show that
$n_{1}n_{2}(n_{2}+1)^{^{k-1}}$ vertices in the copies of $H$ in
$k^{th}$-corona in $G\odot ^{k}H$ will be colored black by taking
$n_{1}(n_{2}+1)^{^{k-1}}Z(H)$ more vertices as initially colored
black.
\\First, we show that
\begin{equation}
Z(G\odot ^{k}H)\leq Z(G\odot ^{k-1}H)+n_{1}(n_{2}+1)^{k-1}Z(H).
\label{m}
\end{equation}%
We define $B=B^{\wp}\cup B_k$ where
$|B|=\alpha+n_{1}(n_{2}+1)^{k-1}Z(H)$. We claim that $B$ is a zero
forcing set of $G\odot^{k} H$. Let $B$ is initially colored black.
Note that the degree of each vertex in $H_{i}$ in $k^{th}$-corona in
$G\odot ^{k}H$ is increased by one. We color all the vertices of
copies of $H$ in $k^{th}$-corona by using $B^i_{j_2\cdots j_k}$ and
the corresponding root vertex in $(k-1)^{th}$-corona. We obtain the
derived set of all black vertices in $G\odot ^{k}H$ resulting from
repeatedly applying the color-change rule. Hence, (\ref{m})
holds.\par Note that in each copy of $H$ in $k^{th}$-corona at least
$Z(H)$ vertices are required as initially colored black to continue
the zero forcing process.
 Hence,
 \begin{equation}
Z(G\odot ^{k}H)\geq Z(G\odot ^{k-1}H)+n_{1}(n_{2}+1)^{k-1}Z(H).
\label{e}
\end{equation}%
By (\ref{m}) and (\ref{e}), (\ref{v}) holds. Hence, the result is
true for any positive integer $k$.
\end{proof}
By using Theorem \ref{api} and Proposition \ref{manha}, we have the
following immediate corollaries:
\begin{Corollary}
Let $G$ and $H$ be connected graphs of order $n_{1}$, $n_{2} \geq 2$
respectively. Then $Z(G \odot^{k} H) = Z (G \odot^{k-1} H)  + n_{1}
(n_{2} + 1) ^{k-1}$ if and only if $H\cong P_{n_{2}}$.
\end{Corollary}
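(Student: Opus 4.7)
The plan is to reduce the corollary to an identification of graphs $H$ with $Z(H)=1$, using the formula provided by Theorem \ref{api} and the characterization in Proposition \ref{manha}.

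First I would start from the identity
\begin{equation*}
Z(G\odot^{k}H)=Z(G\odot^{k-1}H)+n_{1}(n_{2}+1)^{k-1}Z(H),
\end{equation*}
given by Theorem \ref{api}, which is valid for arbitrary connected $G$ of order $n_1\ge 2$ and connected $H$ of order $n_2\ge 2$. Subtracting $Z(G\odot^{k-1}H)$ from both sides of the corollary's equation and then comparing with the formula above gives the equivalence
\begin{equation*}
Z(G\odot^{k}H)=Z(G\odot^{k-1}H)+n_{1}(n_{2}+1)^{k-1}\;\Longleftrightarrow\;n_{1}(n_{2}+1)^{k-1}Z(H)=n_{1}(n_{2}+1)^{k-1}.
\end{equation*}
Since $n_{1}\ge 2$ and $n_{2}+1\ge 3$, the factor $n_{1}(n_{2}+1)^{k-1}$ is a positive integer, and I can cancel it to obtain the condition $Z(H)=1$.

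Finally I would invoke Proposition \ref{manha}(a), which says that for a connected graph $H$ of order $n_{2}\ge 2$, the equality $Z(H)=1$ holds if and only if $H\cong P_{n_{2}}$. Chaining the two equivalences yields the corollary in both directions.

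There is essentially no obstacle here: everything is an immediate algebraic rearrangement of Theorem \ref{api} combined with the known characterization of graphs with zero forcing number one. The only small point that needs a sentence of justification is that $n_{1}(n_{2}+1)^{k-1}$ is nonzero (so that cancellation is legitimate), which follows from the hypothesis $n_{1},n_{2}\ge 2$.
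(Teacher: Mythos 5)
Your argument is correct and is exactly what the paper intends: it presents this statement as an immediate corollary of Theorem \ref{api} and Proposition \ref{manha}, obtained by cancelling the nonzero factor $n_{1}(n_{2}+1)^{k-1}$ to reduce the equation to $Z(H)=1$ and then invoking the path characterization. No further comment is needed.
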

\begin{Corollary}
Let $G$ and $H$ be connected graphs of order $n_{1}$, $n_{2} \geq 2$
respectively. Then $Z(G \odot^{k} H) = Z (G \odot^{k-1} H)  + n_{1}
(n_{2} + 1) ^{k-1}(n_{2}-1)$ if and only if $H\cong K_{n_{2}}$.
\end{Corollary}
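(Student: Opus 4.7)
The plan is to observe that this corollary is essentially a direct substitution into Theorem \ref{api} combined with part $(b)$ of Proposition \ref{manha}. Theorem \ref{api} gives the exact identity
\begin{equation*}
Z(G\odot^{k} H) = Z(G\odot^{k-1} H) + n_{1}(n_{2}+1)^{k-1} Z(H),
\end{equation*}
so the claimed equation $Z(G\odot^{k} H) = Z(G\odot^{k-1} H) + n_{1}(n_{2}+1)^{k-1}(n_{2}-1)$ holds if and only if $n_{1}(n_{2}+1)^{k-1} Z(H) = n_{1}(n_{2}+1)^{k-1}(n_{2}-1)$.

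Since $n_{1} \geq 2$ and $n_{2}+1 \geq 3$, the factor $n_{1}(n_{2}+1)^{k-1}$ is a nonzero positive integer and may be cancelled. Thus the equation above is equivalent to $Z(H) = n_{2} - 1$. At this point I would invoke Proposition \ref{manha}$(b)$, which states that for a connected graph $H$ of order $n_{2} \geq 2$ one has $Z(H) = n_{2}-1$ if and only if $H \cong K_{n_{2}}$. Putting these two equivalences together in both directions yields the corollary.

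There is no real obstacle here: the argument is a two-step equivalence, first cancelling the common factor to reduce the corona-product statement to the single-graph statement $Z(H)=n_2-1$, and then applying the known characterization of complete graphs in terms of the zero forcing number. I would present the proof as two short displayed equivalences, explicitly noting the non-vanishing of $n_{1}(n_{2}+1)^{k-1}$ to justify the cancellation, and citing Proposition \ref{manha}$(b)$ at the final step.
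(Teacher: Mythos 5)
Your proposal is correct and matches the paper's intent exactly: the paper presents this as an immediate corollary of Theorem \ref{api} together with Proposition \ref{manha}(b), which is precisely the substitute-cancel-characterize argument you give. No gap.
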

The wheel graph of order $n+1$ is defined as $W_{1,n}=K_{1}\odot
C_{n}$, where $K_{1}$ is the singleton graph. Any three pairwise
adjacent vertices of a wheel form a zero forcing set of $W_{1,n}$.
\begin{Remark} Let $W_{1,n}$, $n\geq3$, be a wheel graph. Then $Z(W_{1,n})=3.$
\end{Remark}
The fan graph $F_{n_{1},n_{2}}$ is defined as the join graph
$\overline{K_{n_{1}}}+P_{n_{2}}$. The case $n_{1}=1$ corresponds to
the usual fan graph $F_{1,n_{2}}$. Note that $F_{1,n_{2}}=K_{1}\odot
P_{n_{2}}$, where $K_{1}$ is the singleton graph. Two adjacent
vertices of $P_{n_{2}}$ where one must be an end vertex form a zero
forcing set of $F_{1,n_{2}}$.
\begin{Remark} Let $F_{1,n}$, $n\geq2$, be a fan graph. Then $Z(F_{1,n})=2.$
\end{Remark}

\begin{Theorem}
Let $G$ be a connected graph of order $n_{1}\geq 2$ and let $H$ be a
graph of order $n_{2}\geq 2$. Then $$Z(G\odot^{k} H)\leq
n_{1}(n_{2}+1)^{k-1}Z(K_{1}\odot H).$$
\end{Theorem}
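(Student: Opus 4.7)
The plan is to partition $V(G\odot^k H)$ into $n_1(n_2+1)^{k-1}$ ``bunches,'' one per vertex $v$ of $G\odot^{k-1}H$. The bunch attached to $v$ is $B_v:=\{v\}\cup V(H_v)$, where $H_v$ is the copy of $H$ in the $k^{th}$-corona joined to $v$; by construction the subgraph of $G\odot^k H$ induced on $B_v$ is isomorphic to $K_1\odot H$, and the family $\{B_v\}$ partitions $V(G\odot^k H)$. For each root $v$ I would fix an optimal zero forcing set $S_v\subseteq B_v$ of the induced subgraph, with $|S_v|=Z(K_1\odot H)$, and set $S:=\bigcup_v S_v$. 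Since the $B_v$ are pairwise disjoint, $|S|=n_1(n_2+1)^{k-1}Z(K_1\odot H)$, so the task reduces to showing that $S$ is a zero forcing set of $G\odot^k H$.

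I would organize the color-change applications into two phases. \textbf{Phase 1:} in every bunch, apply only those forcings whose forcer lies in $V(H_v)$. Because every vertex of $V(H_v)$ has its entire neighborhood inside $B_v$, the validity of such a forcing in $G\odot^k H$ is equivalent to its validity inside the bunch alone, and the bunches therefore do not interfere during this phase. Moreover, $v$'s only in-bunch neighbors are the vertices of $V(H_v)$, so $v$ is either initially in $S_v$ or can only be forced by a $V(H_v)$-initiated step; in particular $v$ is black at the end of Phase 1. Hence after Phase 1 across all bunches, the entire vertex set $V(G\odot^{k-1}H)$ (the collection of all roots) is black. The key combinatorial claim is that after Phase 1 at most one vertex of $V(H_v)$ can remain white: indeed, if two or more were still white, then with $v$ black no $V(H_v)$-vertex would have a unique in-bunch white neighbor (Phase 1 being exhausted) and $v$ itself would have at least two white in-bunch neighbors, so the $K_1\odot H$-forcing from $S_v$ would halt, contradicting the choice of $S_v$ as a zero forcing set of the bunch.

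\textbf{Phase 2:} in every bunch in which a single white vertex $w\in V(H_v)$ survived Phase 1, perform the forcing $v\to w$. This is the step at which the cross-bunch structure matters: in $G\odot^k H$ the vertex $v$ has additional neighbors lying in $G\odot^{k-1}H$, but these are roots of other bunches and hence are already black after Phase 1. Combined with $w$ being $v$'s unique in-bunch white neighbor, $w$ is $v$'s only white neighbor in the whole graph, so the forcing is legal. After Phase 2 has been carried out in every bunch, the entire vertex set of $G\odot^k H$ is black, so $S$ is a zero forcing set and $Z(G\odot^k H)\leq|S|=n_1(n_2+1)^{k-1}Z(K_1\odot H)$.

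The main obstacle is the ``at most one white vertex after Phase 1'' claim, since it is the only place where the hypothesis that $S_v$ is a zero forcing set of $K_1\odot H$ is actually used, and it is what makes the Phase 1 / Phase 2 decoupling feasible; once it is in hand, the clean separation between in-bunch neighborhoods (lying entirely inside $B_v$) and cross-bunch neighborhoods (consisting of roots that are already black) makes the coordination between bunches automatic.
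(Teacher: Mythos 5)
Your proof is correct and follows essentially the same route as the paper's: partition $V(G\odot^{k}H)$ into induced copies of $K_{1}\odot H$, one rooted at each vertex of $G\odot^{k-1}H$, and take a forcing basis inside each copy. The only difference lies in verifying that the union forces everything --- the paper splits into cases according to whether $H$ has isolated vertices and assumes particular forcing bases (e.g.\ that the root $v_{i}$ lies in $A_{i}$ when $H$ has no isolated vertex), whereas your two-phase argument, which rests on the standard fact that a stalled superset of a zero forcing set must be the entire vertex set, works uniformly for an arbitrary choice of basis in each block and handles general $k$ directly.
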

 \begin{proof}
  We denote by $K_{1}\odot H_{i}$ the subgraph of $G\odot H$, obtained by joining
the vertex $v_{i}\in V$ with all the vertices of $H_{i}$. Let
$A_{i}$ be a forcing basis for $K_{1}\odot H_{i}$ and
$A=\cup_{i=1}^{n_{1}}A_{i}$ with $|A|=n_1Z(K_{1}\odot H)$. We show
that $A$ is a zero forcing set of $G\odot H$. Now there are two
cases:
\\Case $1$: $H$ contains no isolated vertex. We have that $v_{i}\in A_{i}$.
So $A$ contains all the vertices of $G$. Note that any black vertex
in $H_{i}$ has only one white neighbor, so after finite many
applications of the color-change rule all the vertices in $H_i$ for
each $i=1,2,\cdots, n_{1}$ are turned black. Now all the vertices in
$G\odot H$ are colored black.
\\Case $2$: $H$ contains isolated vertices, so there exists at least one vertex
$x\in H$ such that $x\nsim u$, for all $u\in H$. We have that
$v_{i}$ does not belong to the zero forcing set of minimum
cardinality of $K_{1}\odot H_{i}$. So $A$ does not contain any
vertex from $G$.
\\Subcase $2.1$: $H$ has only one isolated vertex. Let $x_{i}$ is the isolated
vertex of $H_{i}$ also $x_{i}\in A_{i}$ and $x_{i}\nsim u^{i}_p$ for
any $u^{i}_p\in V_{i}$. Thus $x_{i}\rightarrow v_{i}$, $1\leq i\leq
n_{1}$. Now all the vertices of $G$ are forced to black. Note that
any black vertex in $H_{i}$ has only one white neighbor, so after
finite many applications of the color-change rule all the vertices
in $H_i$ for each $i=1,2,\cdots, n_{1}$ are turned black. Now all
the vertices in $G\odot H$ are colored black.
\\Subcase $2.2$: $H$ has more than one isolated vertices, then all
isolated vertices in $H_{i}$ belong to $A_{i}$ except one, say
$y_{i}$ does not belong to $A_{i}$, then $v_{i}$ will be forced by
any isolated black vertex in $H_{i}$, $1\leq i\leq n_{1}$. Now all
the vertices of $G$ are forced to black and after finite iterative
applications of the color-change rule for connected subgraph of
$H_{i}$ all the vertices in these graphs are turned black for each
$i=1,2,\cdots, n_{1}$, and then $v_{i}\rightarrow y_{i}.$ Hence
$Z(G\odot H)\leq n_{1}Z(K_{1}\odot H).$ Therefore, the result
follows.
\end{proof}
\begin{Corollary} Let $G$ be a connected graph of order $n_{1}\geq
2$ and $H$ be a disconnected graph of order $n_{2}\geq 2$. Then
$$Z(G\odot^{k} H)=n_{1}(n_{2}+1)^{k-1}Z(K_{1}\odot H)=n_{1}(n_{2}+1)^{k-1}(n_{2}-1)$$ if and only if $H\cong
\overline{K_{n_{2}}}$. \label{q}
\end{Corollary}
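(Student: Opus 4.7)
I prove the biconditional in two directions, handling the sufficiency by direct bound-matching and the necessity by contraposition with an explicit construction.

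For the $(\Leftarrow)$ direction, assume $H\cong\overline{K_{n_{2}}}$. Then $K_{1}\odot H\cong K_{1,n_{2}}$ is a star, and $Z(K_{1,n_{2}})=n_{2}-1$ follows by a standard leaf argument: any $n_{2}-1$ leaves form a zero forcing set, since one of them forces the center (its unique neighbor) and then the center forces the remaining leaf, while any smaller set leaves at least two white leaves whose only neighbor---the center---retains two white neighbors and can never act. This establishes the middle equality, and the upper bound $Z(G\odot^{k}H)\leq n_{1}(n_{2}+1)^{k-1}(n_{2}-1)$ follows immediately from the preceding theorem. For the matching lower bound I would observe that when $H=\overline{K_{n_{2}}}$, every vertex of every $k$-th corona copy of $H$ is a pendant in $G\odot^{k}H$, its unique neighbor being the corresponding root in $G\odot^{k-1}H$. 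Given any zero forcing set $S$ and any such root $r$, at least $n_{2}-1$ of the $n_{2}$ pendants of $r$ must lie in $S$: otherwise two pendants of $r$ remain white, but those pendants can only be forced by $r$, and $r$ would retain at least two white neighbors throughout, never able to force either. Summing over the $n_{1}(n_{2}+1)^{k-1}$ roots gives $|S|\geq n_{1}(n_{2}+1)^{k-1}(n_{2}-1)$.

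For the $(\Rightarrow)$ direction I argue contrapositively: assume $H$ is disconnected with $H\not\cong \overline{K_{n_{2}}}$, so that $H$ contains at least one edge $\{a,b\}$, and exhibit a zero forcing set $S^{\prime}$ of $G\odot^{k}H$ with $|S^{\prime}|<n_{1}(n_{2}+1)^{k-1}(n_{2}-1)$. Starting from the set used in the proof of the preceding theorem, I modify the choice inside each $k$-th corona copy $H_{r}$ by selecting only $n_{2}-2$ initial black vertices of $H_{r}$, omitting the copy of $b$. Once $r$ becomes black---either via an isolated-vertex pendant of $H_{r}$ when $H$ has an isolated vertex, or via the forcing chain inherited from $G\odot^{k-1}H$, using the connectedness of $G$---the black copy of $a$ has the copy of $b$ as its only remaining white neighbor inside $H_{r}$ and forces it. Summing the saving of at least one vertex over the $n_{1}(n_{2}+1)^{k-1}$ copies in the $k$-th corona yields the strict inequality, contradicting the hypothesis.

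The main obstacle lies in the $(\Rightarrow)$ construction: the mechanism by which $r$ becomes black differs across the subcases of $H$ (with versus without isolated vertices), and in the no-isolated-vertex subcase I must track the forcing propagation from $G\odot^{k-1}H$ and verify that no compensating initial vertex is needed in earlier coronas. A second subtlety is that when $a$ has $H$-degree greater than one, I must confirm that the other $H$-neighbors of $a$ are already black when $a$ fires---this follows because the selection omits only $b$ (and possibly an isolated vertex) inside $H_{r}$, so every other $H$-neighbor of $a$ is among the $n_{2}-2$ initial black vertices in that copy.
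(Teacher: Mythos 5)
Your $(\Leftarrow)$ direction is correct and is actually tighter than the paper's own argument: the observation that each of the $n_{1}(n_{2}+1)^{k-1}$ roots has $n_{2}$ pendant neighbours, of which at most one may be omitted from any zero forcing set (two white pendants of a common root could only ever be forced by that root, which would then permanently retain two white neighbours), is a complete lower-bound proof valid for every $k$, whereas the paper only asserts the lower bound in the case $k=1$.

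The genuine gap is in the $(\Rightarrow)$ direction: saving one vertex in \emph{every} $k$-th corona copy does not yield a zero forcing set in general. Take $k=1$, $G=K_{2}$ and $H=K_{2}\cup K_{2}$ with components $\{a,b\}$ and $\{c,d\}$ (disconnected, $n_{2}=4$, not $\overline{K_{4}}$, no isolated vertices). With only $n_{2}-2=2$ initial black vertices in a copy $H_{i}$ there are two possibilities. If the two omitted vertices lie in different components, every black vertex of $H_{i}$ has two white neighbours (its white partner and $v_{i}$), so the copy can never force its root; if every copy is of this type, no force ever occurs. If the two omitted vertices lie in the same component, the fully black component forces $v_{i}$, but the two adjacent white vertices are then a component whose only other neighbour is $v_{i}$, and $v_{i}$ always has both of them white, so the copy is never completed. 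Mixing the two types does not help, and re-inserting the roots into the initial set (as in Case 1 of the preceding theorem) destroys the count, giving $n_{1}+n_{1}(n_{2}-2)=n_{1}(n_{2}-1)$ with no saving. So your hoped-for verification that ``no compensating initial vertex is needed in earlier coronas'' fails: the process cannot even start. The paper's construction saves only \emph{one} vertex in total: it keeps $n_{2}-1$ black vertices in every copy except a single copy $H_{n_{1}}$, so that each full copy forces its root, the blackened roots propagate through the connected graph $G$ to reach $v_{n_{1}}$, and the lone deficient copy is then rescued by its externally blackened root. Your per-copy saving does go through when $H$ has at least two isolated vertices (each root is forced by a surviving isolated pendant, as in the paper's later Theorem on components), but in the remaining cases it must be replaced by the single global saving.
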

\begin{proof}
 Suppose $H\cong \overline{K_{n_{2}}}$. For $k=1$, we have to show that $Z(G\odot H)=
n_{1}(n_{2} - 1)$. We define $B_{i} = V_{i} - \{u^i_{l}\}$, for any
$1 \leq l \leq n_{2}$, and for each $i=1,2,\cdots, n_{1}$ and
$B=\cup_{i=1}^{n_{1}}B_{i}$. We claim that $B$ is a zero forcing set
of $G\odot H$ with $|B|= n_{1}(n_{2} - 1) $. To prove the claim, we
first assume that $B$ is initially colored black. Note that every
initial black vertex of $H_{i}$ has single white neighbor $v_{i}$,
so $u^i_{1} \rightarrow v_{i}$. Now there is only one white vertex
$u^i_{l}$ in each $V_{i}$ and this vertex is the single white
neighbor of the corresponding vertex $v_{i}$, so $v_{i}\rightarrow
u^i_{l}$ for each $i=1,2,\cdots,n_{1}$.
\par Consider $v_{i}\in V$ and the corresponding copy $H_{i}$ of $H$
for any $i$, $1\leq i\leq n_{1}$. Note that at least $n_{2}-1$
vertices of $H_{i}$ are required to start the zero forcing process.
Hence, $Z(G\odot H)\geq n_{1}(n_{2}-1)$. Therefore, the result
follows. \par On the other hand, $Z(G\odot ^{k}H)=n_{1}(n_{2}-1)$
implies $H\cong \overline{K_{n_{2}}}$. Suppose $H\ncong
\overline{K_{n_{2}}}$ and let $u^i_{(n_{2}-2)}\sim u^i_{n_{2}}$, for
$1\leq i\leq n_{1}$. We define
$B_{i}=\{u^i_{1},u^i_{2},\cdots,u^i_{(n_{2}-1)}\}$, $1\leq i\leq
n_{1}-1$ and
$B_{n_{1}}=\{u^{n_1}_{1},u^{n_1}_{2},\cdots,u^{n_1}_{(n_{2}-2)}\}$.
Let $B=\cup_{i=1}^{n_{1}-1}B_{i}\cup B_{n_{1}}$ with
$|B|=n_{1}(n_{2}-1)-1$. We show that $B$ is a zero forcing set of
$G\odot H$. Assume that $B$ is initially colored black. Note that
each isolated vertex in each $H_{i}$, $1\leq i\leq n_{1}$, has only
one white neighbor $v_{i}\in V$, so $v_{i}$ will be forced to black
for each $i=1,2,\cdots,n_{1}$. Now all the vertices of $G$ are
colored black. Since $u^i_{(n_{2}-2)}$ has only one white neighbor
$u^i_{n_{2}}$ for each $i=1,2,\cdots,n_{1}$, so
$u^i_{(n_{2}-2)}\rightarrow u^i_{n_{2}}$, $1\leq i\leq n_{1}$. Note
that $v_{n_{1}}$ has only one white neighbor $u^{n_1}_{(n_{2}-1)}$,
so $v_{n_{1}}\rightarrow u^{n_1}_{(n_{2}-1)}$. Now all the vertices
are colored black. So $B$ is a zero forcing set of cardinality
$n_{1}(n_{2}-1)-1$, a contradiction. Therefore, the result follows.
\end{proof}
The following definitions are introduced in \cite{CDKY}. Fix a graph
$T$. A vertex of degree at least three is called a major vertex. An
end vertex $u$ is called a terminal vertex of a major vertex $v$ if
$d(u,v)<d(u,w)$ for every other major vertex $w$. The terminal
degree of a major vertex $v$ in $T$, denoted by $ter_{T}(v)$, is the
number of terminal vertices of $v$. A major vertex $v$ is an
exterior major vertex (emv) if it has positive terminal degree. Let
$\sigma(G)$ denote the sum of terminal degrees of all major vertices
of $G$ and let $ex(G)$ denote the number of emvs of $G$. We further
define an exterior degree two vertex to be a vertex of degree two
that lies on a path from a terminal vertex to its major vertex and
an interior degree two vertex to be a vertex of degree two such that
the shortest path to any terminal vertex includes a major vertex.

\begin{Theorem} \cite{CEJO,HM,S} If $T$ is a tree that is not a path, then
$dim(T)=\sigma(T)-ex(T)$. \label{n}
\end{Theorem}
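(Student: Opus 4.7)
The plan is to establish the equality $dim(T) = \sigma(T) - ex(T)$ by matching lower and upper bounds, arguing locally at each exterior major vertex (emv) of $T$.

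For the lower bound, suppose $W \subseteq V(T)$ is a resolving set and let $v$ be an emv with $ter_{T}(v) = t_{v} \geq 1$ terminal vertices $u_{1}, \ldots, u_{t_{v}}$, reached from $v$ by internally disjoint paths (``legs'') $L_{1}, \ldots, L_{t_{v}}$, each excluding $v$. I would show that $W$ meets at least $t_{v} - 1$ of these legs. If, to the contrary, $W \cap L_{i} = W \cap L_{j} = \emptyset$ for some $i \neq j$, let $x$ and $y$ be the neighbors of $v$ on $L_{i}$ and $L_{j}$ respectively. For every $w \in W$, the unique $x$-to-$w$ and $y$-to-$w$ paths in the tree pass through $v$, so $d(x, w) = 1 + d(v, w) = d(y, w)$, contradicting that $W$ resolves $\{x, y\}$. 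Summing $t_{v} - 1$ over the $ex(T)$ emvs yields $|W| \geq \sum_{v}(t_{v} - 1) = \sigma(T) - ex(T)$.

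For the upper bound, I would construct an explicit resolving set $W$ of size $\sigma(T) - ex(T)$ by selecting, for each emv $v$ with $t_{v}$ terminals, all but one of those terminals. To verify that $W$ resolves $T$, I would do a case analysis on a distinct pair $x, y \in V(T)$: (a) if $x, y$ share a leg of some emv, a chosen terminal ending a sibling leg of that emv distinguishes them, since distance along the shared leg changes by $\pm 1$ per step toward $v$; (b) if $x, y$ lie on distinct legs of a common emv, a chosen terminal on one of those legs yields unequal distances; (c) otherwise, at least one of $x, y$ lies in the ``interior subtree'' obtained by deleting all legs, and the tree structure together with distances to the chosen terminals separates them.

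The main obstacle is the verification in case (c), where the chosen leg-terminals must still discriminate pairs of interior vertices (major vertices and interior degree-two vertices). The key point is that in a tree the unique path between any two interior vertices has a definite direction with respect to at least one emv $v^{*}$, so the distance to each chosen terminal of $v^{*}$ changes monotonically as one moves along that path; this forces distinct distance codes. Care is also needed when several emvs have legs of equal length, since one must ensure the selection of ``omitted'' terminals does not create a symmetry-induced collision. Once this bookkeeping is completed, the two bounds match and the formula follows.
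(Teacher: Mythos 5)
The paper offers no proof of this statement: it is quoted verbatim from \cite{CEJO,HM,S}, so there is no internal argument to compare yours against and I can only assess your proposal on its own terms. Your lower bound is correct and essentially complete: the legs of the exterior major vertices are pairwise disjoint, a vertex outside a leg $L_i$ of $v$ can reach $L_i$ only through $v$, so two legs of the same emv that both miss $W$ yield a pair of neighbours of $v$ with identical distance vectors, and summing $t_v-1$ over the emvs gives $|W|\geq \sigma(T)-ex(T)$. That is the standard argument and it works.

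The gap is in the upper bound, and you have located it yourself: case (c). ``Monotonicity of distance along the $x$--$y$ path'' is not by itself enough. For any witness $w$, if $z_w$ denotes the vertex of the $x$--$y$ path nearest to $w$, then $d(w,x)=d(w,z_w)+d(z_w,x)$ and $d(w,y)=d(w,z_w)+d(z_w,y)$, so $w$ separates $x$ from $y$ precisely when $z_w$ is not the midpoint of that path. What must therefore be proved is that the chosen terminals cannot all project onto the midpoint, i.e.\ cannot all lie in the branch of $T$ hanging off the midpoint away from both $x$ and $y$. Establishing this requires tracking which leaves of $T$ lie in the branches containing $x$ and $y$, which emvs they are terminals of, and exploiting that only one terminal per emv was omitted; that bookkeeping is the substantive half of the classical proof and is entirely absent from your sketch. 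Two smaller points: your worry that equal leg lengths force a careful choice of omitted terminals is a red herring, since any choice of one omitted terminal per emv works; and in your case (a) the ``sibling leg'' carries no chosen terminal when the emv has terminal degree $1$, so the witness must then be taken from elsewhere in $W$, which requires a separate (easy) argument.
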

\begin{Theorem}\cite{EKE} For any tree $T$, we have $Z(T)=dim(T)$ iff $T$ has
no interior degree two vertices and each major vertex $v$ of $T$
satisfies $ter_T(v)\geq2$. \label{r}
\end{Theorem}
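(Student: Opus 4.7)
The plan is to reduce, via Theorem~\ref{n}, the statement to showing that $Z(T) = \sigma(T) - ex(T)$ holds if and only if the two structural conditions are satisfied. Paths are a separate degenerate case: by Proposition~\ref{manha}, $Z(P_n) = 1 = \dim(P_n)$, and both conditions hold vacuously since $P_n$ has no major vertex. Henceforth I assume $T$ has at least one major vertex and freely use Theorem~\ref{n}.

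For sufficiency, assume $T$ has no interior degree two vertex and every major vertex $v$ satisfies $ter_T(v) \geq 2$ (so every major vertex is automatically an exterior major vertex). The structure of $T$ is then rigid: every degree-two vertex lies on a pendant degree-two path from some exterior major vertex down to its terminal leaf, and adjacent major vertices are joined by a single edge, since any intermediate degree-two vertex would be interior by definition. For the upper bound $Z(T) \leq \sigma(T) - ex(T)$, at each exterior major vertex $v$ include in $S$ the leaves of some $ter_T(v) - 1$ of its pendant paths; then $|S| = \sigma(T) - ex(T)$. Each such leaf propagates upward, filling its pendant path, and then each exterior major vertex is forced by the top vertex of any one of its filled pendant paths, since that top vertex has $v$ as its unique white neighbor. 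Once every exterior major vertex --- hence every major vertex --- has been blackened, each such vertex retains exactly one white neighbor, namely the top of its one unselected pendant path, which it forces; the force then propagates down to the corresponding leaf. For the matching lower bound $Z(T) \geq \sigma(T) - ex(T)$, observe that each pendant path at an exterior major vertex $v$ must be filled either by a seed vertex on the path itself or by being forced from $v$; since $v$ can force at most one white neighbor over the entire run of the zero forcing process, at most one pendant path per exterior major vertex is of the latter type, so the remaining $ter_T(v) - 1$ pendant paths each contribute a distinct seed vertex to any zero forcing set.

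For necessity I argue the contrapositive: if either structural condition fails, then $Z(T) > \sigma(T) - ex(T) = \dim(T)$. The pendant-path lower bound still yields $Z(T) \geq \sigma(T) - ex(T)$, so it suffices to produce one obligatory extra seed. If $u$ is an interior degree two vertex, it lies on a path joining two major vertices $v_1, v_2$; after all pendant paths at all exterior major vertices are filled, each of $v_1, v_2$ retains two white neighbors (one on the intermediate path and one on its unselected pendant path), so the intermediate path through $u$ cannot be filled without an additional seed on that path. If some major vertex $v_0$ has $ter_T(v_0) \leq 1$, an analogous analysis at the non-terminal branches of $v_0$ --- branches that cannot be reached by propagation from seeds contained entirely in pendant paths of exterior major vertices --- forces at least one additional seed.

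The main technical hurdle is making the ``one extra seed'' argument quantitatively tight: one must exclude the possibility that a clever zero forcing set avoids the pendant-path structure in a way that exactly offsets the obstruction. A clean route is induction on the number of major vertices, exploiting the observation that pruning a pendant path at an exterior major vertex $v$ reduces both $Z(T)$ and $\dim(T)$ by $ter_T(v)-1$ under the hypotheses, whereas any violation of the hypotheses produces a net deficit of at least one that propagates through the induction.
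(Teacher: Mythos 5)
First, a point of context: the paper does not prove this statement at all --- it is quoted verbatim from \cite{EKE} as a known result, so there is no internal proof to compare against. Judged on its own terms, your reduction via Theorem~\ref{n}, the separate treatment of paths, and the entire sufficiency direction are sound: the construction of a zero forcing set of size $\sigma(T)-ex(T)$ works, and the lower bound $Z(T)\geq\sigma(T)-ex(T)$ (valid for \emph{every} tree) is correctly obtained from the observations that a seedless pendant path can only be entered by a force from its major vertex and that any vertex forces at most once.

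The genuine gap is in the necessity direction. Your argument for the interior degree-two case is phrased statically (``each of $v_1,v_2$ retains two white neighbors''), which by itself proves nothing --- a vertex with two white neighbors at one moment may force later once another vertex removes one of them; the correct argument is the forcing-budget one: if $|S|=\sigma(T)-ex(T)$ then equality in the leg count forces every seed onto a leg with exactly $ter(v)-1$ seeded legs per exterior major vertex, so each of $v_1,v_2$ must spend its unique force entering its own unseeded leg and neither can initiate the seedless intermediate path. Even repaired this way, the argument presupposes $ter(v_1),ter(v_2)\geq 1$, so the subcase where an endpoint of the intermediate path has terminal degree zero falls through to the second case --- and that second case ($ter_T(v_0)\leq 1$) is precisely the one you never argue: ``an analogous analysis \ldots forces at least one additional seed'' is an assertion, not a proof, and the situation there is genuinely more delicate because $v_0$ contributes nothing to $\sigma-ex$ and may have no pendant path at all. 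Finally, the inductive escape route you sketch is quantitatively wrong: pruning a single pendant path at an exterior major vertex $v$ with $ter_T(v)\geq 2$ decreases both $Z(T)$ and $\dim(T)$ by $1$, not by $ter_T(v)-1$ (already for $K_{1,3}$ versus $P_3$ both drop from $2$ to $1$), so the proposed induction step does not close the gap either. As written, the ``only if'' half of the theorem remains unproved.
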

\begin{Theorem}
Let $T$ be a tree of order $n\geq3$, that has no interior degree two
vertices and each major vertex $v$ of $T$ satisfies $ter_T(v)\geq2$,
then
\\$$ Z(T\odot^{k} K_{1}) =\left\{
  \begin{array}{ll}
    \sigma(T), & \hbox{$k=1$,} \\
    2^{k-2}n, & \hbox{$k\geq2$.}
  \end{array}
\right. $$\\
\end{Theorem}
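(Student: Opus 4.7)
The plan is to split the argument into two pieces---the base case $k=1$ and the inductive case $k\geq 2$---since the two cases require rather different ingredients. Throughout, write $T_j := T\odot^{j}K_1$ and $p^{(j)}_x$ for the pendant vertex attached at step $j$ to a vertex $x\in V(T_{j-1})$.

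For the base case I would show $Z(T\odot K_1)=\sigma(T)$ as follows. Under the given hypothesis every leaf of $T$ is a terminal of a unique major vertex, so $\sigma(T)$ equals the number of leaves of $T$. For the upper bound I would let $S$ be the set of pendants $\ell'$ with $\ell$ ranging over the leaves of $T$; then $\ell'\to\ell$ at the first step, each leaf $\ell$ next forces its $T$-neighbour, launching the forcing chain guaranteed by Theorems~\ref{n} and~\ref{r}, and finally each internal vertex $v\in V(T)$, once blackened along this chain, has exactly one remaining white neighbour (its own pendant $v'$) and forces it. For the matching lower bound I would argue that in $T\odot K_1$ each ``leaf branch'' of $T$---a terminal vertex together with its pendant and the exterior degree-two chain leading to its major vertex---must contribute at least one vertex to any zero forcing set, because otherwise the associated major vertex is permanently left with too many white neighbours for any force to start inside the branch; summing this contribution over all terminals of all major vertices then gives $|S|\geq\sigma(T)$.

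For the inductive case I would prove $Z(T_k)=2^{k-2}n$ by a pairing argument that is oblivious to the internal structure of $T$. The leaves of $T_k$ are exactly the $2^{k-1}n$ pendants $p^{(k)}_x$ with $x\in V(T_{k-1})$, and they group into $2^{k-2}n$ pairs, one per $v\in V(T_{k-2})$: the pair associated with $v$ is $\{p^{(k)}_v,\, p^{(k)}_{p^{(k-1)}_v}\}$, which sits at the two ends of the four-vertex path $p^{(k)}_v-v-p^{(k-1)}_v-p^{(k)}_{p^{(k-1)}_v}$. For the lower bound I would check that if both leaves of such a pair are initially white then both $v$ and $p^{(k-1)}_v$ permanently have at least two white neighbours, so neither leaf is ever forced; hence every zero forcing set must meet every pair, giving $Z(T_k)\geq 2^{k-2}n$. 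For the upper bound I would take $S=\{p^{(k)}_v : v\in V(T_{k-2})\}$, of cardinality $|V(T_{k-2})|=2^{k-2}n$, and trace the three-wave cascade $p^{(k)}_v\to v$ (step one), then $v\to p^{(k-1)}_v$ (step two, which is possible because in step one every $T_{k-2}$-neighbour of $v$ has also been blackened in parallel), then $p^{(k-1)}_v\to p^{(k)}_{p^{(k-1)}_v}$ (step three), which blackens every vertex of $T_k$.

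The hardest part I expect is the lower bound in the base case $k=1$. The clean pairing argument that handles $k\geq 2$ has no analogue for $k=1$, because the ``previous-step'' pendants simply do not exist yet, so the lower bound must instead exploit the identity $Z(T)=\sigma(T)-ex(T)$ from Theorems~\ref{n} and~\ref{r} and then show that each of the $ex(T)$ exterior major vertices of $T$ forces an extra vertex into any zero forcing set of $T\odot K_1$ beyond what is already needed inside $T$ itself. It is precisely at this point that both hypotheses ``no interior degree-two vertices'' and ``every major vertex has $ter_T(v)\geq 2$'' do real work.
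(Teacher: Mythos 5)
Your $k\geq 2$ upper bound is correct: the set $\{p^{(k)}_v : v\in V(T_{k-2})\}$ does force all of $T_k$ by the three-wave cascade you describe, and this is more constructive than the paper, which simply computes $\sigma(T\odot^{k}K_1)=2^{k-1}n$ and $ex(T\odot^{k}K_1)=2^{k-2}n$ and invokes Theorems \ref{n} and \ref{r}. But your $k\geq 2$ lower bound is broken. The vertex $p^{(k-1)}_v$ has exactly two neighbours in $T_k$, namely $v$ and $p^{(k)}_{p^{(k-1)}_v}$, so the moment $v$ is black it has at most one white neighbour and can force its pendant; leaving both leaves of a pair white does not block the process. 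Indeed $V(T_{k-1})$ is a zero forcing set of $T_k$ (each $x\in V(T_{k-1})$ has $p^{(k)}_x$ as its unique white neighbour) containing no vertex of any of your pairs, so ``every zero forcing set meets every pair'' is false. A correct lower bound is available: for a tree the zero forcing number equals the path cover number \cite{BBBCCFGHHMNPS}, each path covers at most two leaves, and $T_k$ has $2^{k-1}n$ leaves, giving $Z(T_k)\geq 2^{k-2}n$; alternatively one checks that for $k\geq 2$ the tree $T_k$ genuinely satisfies the hypotheses of Theorem \ref{r} (its major vertices are exactly the vertices of $T_{k-2}$, each of terminal degree two, and all its degree-two vertices are exterior), which is the paper's route.

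The $k=1$ case is where things go seriously wrong, for you and for the paper. Your upper-bound cascade stalls: after $\ell'\to\ell\to w$, the internal vertex $w$ still has two white neighbours, its own pendant $w'$ and its next $T$-neighbour, so it cannot continue the chain; your phrase ``once blackened along this chain, has exactly one remaining white neighbour'' describes the final state, not the state at the moment $w$ must force. (Your lower-bound sketch also fails as stated: for $K_{1,3}\odot K_1$ the set $\{c,c',l_2,l_3\}$ is a forcing basis avoiding the entire branch $\{l_1,l_1'\}$.) In fact the $k=1$ claim is false: take $T$ to be the spider with three legs of length two, which satisfies both hypotheses and has $\sigma(T)=3$; then $T\odot K_1$ has seven leaves, so its path cover number, hence its zero forcing number, is at least $4$ (an explicit forcing basis of size $4$ is $\{b_1',b_2',b_3',a_1'\}$ in the obvious notation). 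The underlying problem, which is also the gap in the paper's own one-line argument, is that the hypotheses of Theorem \ref{r} are not inherited by $T\odot K_1$: here the centre $c$ of the spider becomes a major vertex of $T\odot K_1$ whose only terminal vertex is its own pendant, so $ter_{T\odot K_1}(c)=1$ and $Z(T\odot K_1)\neq dim(T\odot K_1)$.
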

\begin{proof}
 Since $T\odot^{k}K_{1}$ is a tree, with no interior degree two
vertex and each major vertex $v$ satisfies $ter_T(v)\geq2$. Now for
$k=1$, $\sigma(T\odot K_{1})=n$ and $ex(T\odot K_{1})=n-\sigma(T)$,
$dim(T\odot K_{1})=\sigma(T)=Z(T\odot K_{1})$ by Theorem \ref{n} and
Theorem \ref{r} we obtain the result. Since we have
$\sigma(T\odot^{2} K_{1})=2n$, $ex(T\odot^{2} K_{1})=n$, so we
obtain the result for $k=2$ by Theorem \ref{n} and Theorem \ref{r},
$dim(T\odot^{2} K_{1})=n=Z(T\odot^{2} K_{1})$.
\end{proof}
\par Let $\alpha$ be the
number of connected components of a graph $H$. Let us denote the
connected components of $H$ by $C_{l}$, where $1\leq l\leq\alpha$.
\begin{Theorem}
 Let $G$ be a connected graph of order $n_{1}$
and $H$ be a graph of order $n_{2}$. Let $\alpha$ be the number of
connected components of $H$ of order greater than one and let
$\beta$ be the number of isolated vertices of $H$. Then
\\ $Z(G\odot^{k} H) \leq\left\{
  \begin{array}{ll}
    n_{1}(n_{2}+1)^{k-1}\sum_{l=1}^{\alpha}Z(C_{l})+n_{1}(n_{2}+1)^{k-1}(\beta-1), & \hbox{$\alpha\geq1,\beta\geq2$,} \\
    Z(G\odot^{k-1}H)+n_{1}(n_{2}+1)^{k-1}\sum_{l=1}^{\alpha}Z(C_{l}), &
    \hbox{$\alpha\geq1,\beta=0$,} \\
    n_{1}(n_{2}+1)^{k-1}\sum_{l=1}^{\alpha}Z(C_{l})+n_{1}(n_{2}+1)^{k-1}-1,
    & \hbox{$\alpha\geq1,\beta=1$,} \\
    n_{1}(n_{2}+1)^{k-1}(n_{2}-1), & \hbox{$\alpha=0,\beta\geq2$.}
  \end{array}
\right. $\\
\end{Theorem}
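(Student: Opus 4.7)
The plan is to prove the theorem by exhibiting, in each of the four cases, an explicit zero forcing set of $G\odot^{k}H$ whose cardinality matches the claimed bound. The organising observation is that an isolated vertex of $H$, placed as a black vertex in any copy $H_{i}^{(k)}$ of $H$ in the $k$-th corona, has exactly one neighbor in $G\odot^{k}H$, namely its root in $G\odot^{k-1}H$, and therefore can immediately force that root.

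For the cases with $\beta\geq 2$ (Cases 1 and 4), I would take in every $k$-th corona copy the union of forcing bases of the nontrivial components $C_{l}$ (empty when $\alpha=0$, giving Case 4) together with $\beta-1$ of the $\beta$ isolated vertices. The forcing then runs in three sweeps: first, each copy contains at least one isolated black vertex that forces its root, so all of $G\odot^{k-1}H$ turns black; second, in each copy every basis $B_{l}$ now completes its component, because the extra edge to the (already black) root is no longer a white-neighbor obstruction; third, each root $v$ is left with a single white neighbor, the one omitted isolated vertex in $H_{v}^{(k)}$, and forces it. The cardinalities match exactly, and Case 4 recovers Corollary~\ref{q}.

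Case 2 ($\beta=0$) has no isolated vertices to drive the backward forcing, so I would argue by induction on $k$, taking a zero forcing set $B^{\wp}$ of $G\odot^{k-1}H$ of size $Z(G\odot^{k-1}H)$ and augmenting it with a forcing basis of every component of every $k$-th corona copy. Verifying that the set is forcing relies on an interleaved coloring: each time a vertex $v\in V(G\odot^{k-1}H)$ turns black (initially the vertices of $B^{\wp}$), the component bases in $H_{v}^{(k)}$ together with the now-black root $v$ are immediately used to blacken $H_{v}^{(k)}$ \emph{before} the next force inside $G\odot^{k-1}H$ is taken. In this way, whenever a vertex $u$ in $G\odot^{k-1}H$ is used to force, the copy $H_{u}^{(k)}$ is already black and contributes no extra white neighbor to $u$, so the forcing sequence inherited from $G\odot^{k-1}H$ remains valid.

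The main obstacle is Case 3 ($\alpha\geq 1,\ \beta=1$), since the budget of $n_{1}(n_{2}+1)^{k-1}-1$ isolated vertices is short by one relative to Case 1. I would single out one ``special'' $k$-th corona copy, include the forcing bases of its components but omit its isolated vertex, and in every other copy include both the component bases and the unique isolated vertex. The isolated vertices of the non-special copies force their roots, making every vertex of $G\odot^{k-1}H$ black except the root $v^{*}$ of the special copy; the non-special copies are then completed exactly as in Case 1. Next, any $G\odot^{k-1}H$-neighbor $w$ of $v^{*}$ (which exists by connectedness of $G\odot^{k-1}H$) has its entire $k$-th corona copy already black and all of its other $G\odot^{k-1}H$-neighbors already black, so $w\to v^{*}$. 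Once $v^{*}$ is black, the components of the special copy are forced from their bases and finally $v^{*}$ forces the remaining isolated vertex. The delicate point to verify is that at the moment $w$ forces $v^{*}$ it genuinely has only one white neighbor in all of $G\odot^{k}H$; this is where the order of operations (non-special copies completed first) and the connectivity of $G\odot^{k-1}H$ are essential.
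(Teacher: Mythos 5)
Your proposal is correct and follows essentially the same route as the paper: the same four explicit zero forcing sets (component bases plus all but one isolated vertex per copy when $\beta\geq2$, the augmentation of a forcing basis of the previous corona when $\beta=0$, the set short by one isolated vertex with a distinguished copy when $\beta=1$, and the reduction to Corollary~\ref{q} when $\alpha=0$), executed by the same forcing sweeps. If anything you are more careful than the paper, which writes out only the case $k=1$ and, in the $\beta=1$ case, tacitly assumes the distinguished root is adjacent to $v_{n_{1}-1}$ where you explicitly invoke connectedness to choose the forcing neighbor $w$.
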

\begin{proof}
 We define $K^i_{l}$, $1\leq l\leq\alpha$, be a forcing basis for connected component $C^i_{l}$ of $H_{i}$, $1\leq i\leq n_{1}$.

We suppose $\alpha\geq1, \beta\geq2$. We define $P_{i}$ to be the
set of vertices of $G\odot H$ formed by all but one of the isolated
vertices of $H_{i}$, $1\leq i\leq n_{1}$. Let us show that
$B=\cup_{i=1}^{n_{1}}(\cup_{l=1}^{\alpha}K^i_{l}\cup P_{i})$ is a
zero forcing set of $G\odot H$ with
$|B|=n_{1}\sum\limits_{l=1}^{\alpha}Z(C_{l})+n_{1}(\beta-1)$. Let
$B$ is initially colored black. Note that $p^i_{1}\in P_{i}$ has
only one white neighbor $v_{i}\in V$, so $p^i_{1}\rightarrow v_{i}$
for each $i$, $1\leq i\leq n_{1}$. Now all the vertices of $G$ are
colored black. We color all the vertices of connected components
$C^i_{l}$, $1\leq l\leq\alpha$, of $H_{i}$ using $K^i_{l}$ and the
corresponding vertex $v_{i}\in V$, $1\leq i\leq n_{1}$. Note that
the vertex $v_{i}$ has only one white neighbor $p^i_{\beta}$, the
isolated vertex of $H_{i}$ not belonging to $P_{i}$, so
$v_{i}\rightarrow p^i_{\beta}$, $1\leq i\leq n_{1}$, and we have the
derived set of all black vertices in $G\odot H$. As a consequence,
$Z(G\odot H)\leq
n_{1}\sum\limits_{l=1}^{\alpha}Z(C_{l})+n_{1}(\beta-1)$. Therefore,
the result follows. \par Now suppose $\alpha\geq1$, and $\beta=0$.
Let $B=B^{\prime}\cup_{i=1}^{n_{1}}(\cup_{l=1}^{\alpha}K^i_{l})$,
where $B^{\prime}=\{v_{1}, v_{2},\cdots,v_{t}\}$ is a forcing basis
for $G$ and by using $B^{\prime}$ one can color all the other
vertices of $G$ by a sequence of forces in the following order:
$v_{t+1}, v_{t+2},\cdots,v_{n}$, with appropriate indexing of
vertices. We show that $B$ is a zero forcing set of $G\odot H$.
Consider iterative applications of the color-change rule with
initial black set $B$. We color all the vertices of $H_{i}$ with $1
\leq i \leq t$ which are associated with the vertices of
$B^{\prime}$ using the corresponding sets $K^i_{l}$, $1\leq
l\leq\alpha$. Now all the vertices in $H_{i}$ associated with
$v_{i}$, $1 \leq i \leq t$ are colored black.
 Note that there is a vertex $v_{i}$ belonging to $B^{\prime}$ that has
 only one white neighbor $v_{t+1}\in V$.
 Thus $v_i\rightarrow v_{t+1}$. Then we color all the
 vertices in $H_{t+1}$ using the black vertices in $K^{t+1}_{l}$, $1\leq l\leq\alpha$.
 Continuing this process, we can color all the vertices of $G \odot
 H$. So $Z(G\odot H)\leq Z(G)+n_{1}\sum\limits_{l=1}^{\alpha}Z(C_{l})$.
 Therefore, the result follows. \par Now suppose $\alpha\geq1$, $\beta=1$. Let $R$ be the set of all
isolated vertices in each $H_{i}$, $1\leq i\leq n_{1}$, except one
say $r_{n_{1}}$. We define
$B=\cup_{i=1}^{n_{1}}(\cup_{l=1}^{\alpha}K^i_{l})\cup R$. We show
that $B$ is a zero forcing set of $G\odot H$. Let $B$ is initially
colored black. Note that the vertex $r_{i}\in R$, $1\leq i\leq
n_{1}-1$, has only one white neighbor $v_{i}\in V$, so
$r_{i}\rightarrow v_{i}$, $1\leq i\leq n_{1}-1$. We color all the
vertices of $H_{i}$,
 $1 \leq i \leq n_{1}-1$, which are associated with $v_{i}$, $1 \leq i \leq n_{1}-1$ using the corresponding
  sets $K^i_{l}$, $1\leq l\leq\alpha$. Now all the vertices in $H_{i}$ associated with $v_{i}$, $1 \leq i \leq n_{1}-1$ are colored
black. Now $v_{n_{1}-1}$ has only one white neighbor $v_{n_{1}}$, so
$v_{n_{1}-1}\rightarrow v_{n_{1}}$. We color
 $C^{n_{1}}_{l}$ in $H_{n_{1}}$ using $K^{n_{1}}_{l}$, $1\leq
 l\leq\alpha$. Now $v_{n_{1}}$ has only one white neighbor $r_{n_{1}}\in V_{n_{1}}$, so $v_{n_{1}}\rightarrow r_{n_{1}}$ and we have the
 derived set of all black vertices. So $Z(G\odot H)\leq
 n_{1}\sum\limits_{l=1}^{\alpha}Z(C_{l})+n_{1}-1$. Therefore, the result
 follows. \par Now suppose $\alpha=0$, $\beta\geq2$. Here $H$ is an empty graph so
 the result followed by Corollary \ref{q}.
\end{proof}
\section{Lexicographic Product of Graphs}
Let $G$ and $H$ be two graphs. The lexicographic product of $G$ and
$H$, denoted by $G\circ H$, is the graph with vertex set $V(G)\times
V(H)$ = $\{(a,v)\ |\ a\in V(G)$ and $v\in V(H)\}$, where $(a,v)$ is
adjacent to $(b,w)$ whenever $ab\in E(G)$ or $a=b$ and $vw\in E(H)$.
For any vertex $a\in V(G)$ and $b\in V(H)$, we define the vertex set
$H(a)$ = $\{(a,v)\in V(G\circ H)\ |\ v\in V(H)\}$ and $G(b)$ =
$\{(v,b)\in V(G\circ H)\ |\ v\in V(G)\}$. It is clear that the graph
induced by $H(a)$, called a layer $H(a)$, is isomorphic to $H$ and
the graph induced by $G(b)$, called a layer $G(b)$, is isomorphic to
$G$, denoted by $H(a)\cong H$ and $G(b)\cong G$ respectively. We
write $H(a)\sim H(b)$ when each vertex of $H(a)$ is adjacent to all
vertices of $H(b)$ and vice versa, and $H(a)\not \sim H(b)$ means
that no vertex of $H(a)$ is adjacent to any vertex of $H(b)$ and
vice versa.
\par Let $G$ be a connected graph and $H$ be a non-trivial
graph containing $k\geq 1$ components $H_{1},H_{2},\cdots,H_{k}$
with $|V(H_{j})|\geq 2$ for each $j=1,2,\ldots,k$. For any vertex
$a\in V(G)$ and $1\leq i\leq k$, we define the vertex set $H_{i}(a)$
= $\{(a,v)\in V(G\circ H)\ | v\in V(H_{i})\}$. Let
$|V(H_{i})|=m_{i}$, $1\leq i\leq k$. From the definition of $G\circ
H$, it is clear that for every $(a, v)\in V(G\circ H)$, $deg_{G\circ
H}(a, v)= deg_{G}(a)\cdot |V(H)|+ deg_{H}(v)$. If $G$ is a
disconnected graph having $k\geq 2$ components $G_1$, $G_2$, \ldots,
$G_k$, then $G\circ H$ is also a disconnected graph having $k$
components such that $G\circ H=G_1 \circ H\cup G_2 \circ H\cup
\ldots \cup G_k\circ H$ and each component $G_i\circ H$ is the
lexicographic product of connected component $G_i$ of $G$ with $H$,
therefore throughout this section, we will assume $G$ to be
connected.
\begin{Observation}\label{obs1} For any $a,b\in V(G)$, either $H(a)\sim H(b)$ or
$H(a)\not \sim H(b)$ in $G\circ H$.
\end{Observation}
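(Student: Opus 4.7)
The plan is to prove the observation directly from the definition of the lexicographic product, which completely determines adjacencies between distinct $H$-layers in terms of the single edge condition on $G$. The key point is that for $a\neq b$, whether two cross-layer vertices $(a,v)$ and $(b,w)$ are adjacent in $G\circ H$ depends only on whether $ab\in E(G)$, and not on the particular choices of $v,w\in V(H)$.

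First I would observe that the case $a=b$ is trivial since $H(a)$ and $H(b)$ coincide, so I restrict to $a\neq b$. Then I would split on the edge $ab$ as follows. If $ab\in E(G)$, then by the defining clause of the lexicographic product, for every $v\in V(H)$ and every $w\in V(H)$ we have $(a,v)\sim(b,w)$ in $G\circ H$; consequently every vertex of $H(a)$ is adjacent to every vertex of $H(b)$, i.e. $H(a)\sim H(b)$. If instead $ab\notin E(G)$, then since $a\neq b$ the second clause of the definition (requiring $a=b$ and $vw\in E(H)$) cannot apply either, so $(a,v)\not\sim(b,w)$ for every choice of $v,w$; hence no vertex of $H(a)$ is adjacent to any vertex of $H(b)$, i.e. $H(a)\not\sim H(b)$.

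Since $E(G)$ is a fixed subset of pairs of vertices of $G$, exactly one of the two alternatives $ab\in E(G)$ or $ab\notin E(G)$ holds, completing the dichotomy. There is no real obstacle here: the observation is essentially a restatement of the definition of lexicographic product, phrased in terms of the layers $H(a)$. The only care needed is to handle the degenerate case $a=b$ cleanly and to note explicitly that when $a\neq b$ the intra-layer clause of the adjacency rule is vacuous, so the cross-layer adjacencies are governed entirely by $E(G)$.
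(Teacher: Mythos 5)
Your proof is correct and is precisely the argument the paper has in mind: the paper states this as an Observation with no proof at all, treating it as immediate from the definition of the lexicographic product, and your unpacking (for $a\neq b$, adjacency of $(a,v)$ and $(b,w)$ depends only on whether $ab\in E(G)$, so either all cross-layer pairs are adjacent or none are) is the standard justification. Your explicit handling of the degenerate case $a=b$ and of the vacuity of the intra-layer clause is, if anything, more careful than the paper.
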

First, we give a general lower bound on the zero forcing number of
lexicographic product of graphs. Note that given any connected graph
$G$, then $Z(G)=1$ if and only if $G\cong P_{n}$, $n\geq 2$. So, if
$Z(G \circ H)=1$ for some graph $H$, then clearly $G \circ H$ is a
path graph, i.e $G$ is the trivial graph $K_{1}$ and $H$ is a path
or viceversa. So, we have the following result:
\begin{Remark} If $G$ and $H$ are non trivial graphs, then $Z(G\circ H)\geq
2$.
\end{Remark}
\begin{Lemma}\label{lem 1} Let $G$ be a connected graph on $n$ vertices. There exists a forcing basis $S$
for $G+K_{1}$ such that $S\subseteq V(G)$.
\end{Lemma}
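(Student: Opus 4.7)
The plan is to begin with an arbitrary forcing basis $S$ of $G+K_{1}$ and, if the adjoined (universal) vertex $v$ happens to lie in $S$, to swap $v$ for a suitable vertex of $V(G)$ without changing the size. If $v\notin S$ there is nothing to do, so I assume $v\in S$ and set $T=S\setminus\{v\}\subseteq V(G)$. I would then fix any valid sequence of forces starting from $S$ and single out its first force, say $u'\to w$. The whole argument splits on whether $u'\in T$ or $u'=v$.

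In the main case $u'\in T$, I would use the fact that $v$ is initially black to conclude that $w\in V(G)$ and, from the color-change rule, that $N_{G}(u')\setminus T\subseteq\{w\}$. Then I would take $S'=T\cup\{w\}$ and verify that it is a zero forcing set as follows: with the vertices of $S'$ black, the only white neighbor of $u'$ in $G+K_{1}$ is now $v$, since $N_{G}(u')\subseteq T\cup\{w\}=S'$. Hence $u'\to v$ is available as the first force of the new process; after it is performed, the black set becomes $S'\cup\{v\}=S\cup\{w\}$, which contains $S$. Because any superset of a zero forcing set is a zero forcing set, the remaining forces succeed. This gives $S'\subseteq V(G)$ with $|S'|=|S|=Z(G+K_{1})$, so $S'$ is a forcing basis.

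In the remaining case $u'=v$, the vertex $v$ has a unique white neighbor $w$; since $v$ is adjacent to all of $V(G)$, this forces $V(G)\setminus T=\{w\}$ and hence $T=V(G)\setminus\{w\}$. I would then take $S'=V(G)=T\cup\{w\}$, whose cardinality is $|T|+1=|S|$. Starting from $S'$ every vertex of $V(G)$ has $v$ as its only white neighbor, so any one of them forces $v$ and the process completes, proving $S'$ is a forcing basis contained in $V(G)$.

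The only real substance of the proof sits in the first case, and the point that needs care is the justification that the swap $v\leftrightarrow w$ still permits a legal first force; this reduces entirely to the observation $N_{G}(u')\setminus T\subseteq\{w\}$ extracted from the original first force $u'\to w$. Once this is verified, the superset-of-a-zero-forcing-set remark finishes the job, and the cardinality bookkeeping is immediate.
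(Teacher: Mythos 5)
Your proof is correct and follows the same exchange strategy as the paper: if the universal vertex $v$ of $G+K_{1}$ lies in a forcing basis $S$, trade it for a white vertex of $V(G)$ of the same count. In fact your write-up is more complete than the paper's, which only asserts (appealing to $\deg_{G+K_1}(v)=n$ and the bound $Z(G+K_1)\leq n$) that \emph{some} white neighbor $x$ makes $(S\setminus\{v\})\cup\{x\}$ a forcing basis; you pin down the correct replacement as the target $w$ of the first force (or the unique white vertex when $v$ itself performs that force), and the verification via $N_{G}(u')\setminus T\subseteq\{w\}$ --- so that $T\cup\{w\}$ first forces $v$ and thereafter dominates the original basis, whence the standard superset-of-a-zero-forcing-set fact applies --- is exactly the justification the paper omits.
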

\begin{proof} Let $V(G+K_{1})=V(G)\cup \{v\}$. If $v\notin S$ we
have  nothing to prove. Suppose that $v\in S$. Since $G$ is
connected and $deg_{G+K_1}(v)=n$ and also $v$ is initially colored
black so by equation (\ref{eqz}), there exists at least one white
vertex $x\in N_{G+K_1}(v)$ such that $(S\setminus \{v\})\cup \{x\}$
is a forcing basis for $G+K_{1}$.
\end{proof}
\begin{Theorem} \label{thm1}
Let $G$ be a connected graph and $H$ be an arbitrary graph
containing $k\geq 1$ components $H_{1},H_{2},H_{3},\ldots H_{k}$ and
$m_i\geq 2$. Let $Z$ be a zero forcing set of $G\circ H$. For any
vertex $a\in V(G)$, if $Z_{i}(a)= Z\cap H_{i}(a)$ for every $i \in
\{1,2,\ldots k\}$, then $Z_{i}(a)\neq \phi$. Moreover, if $B_{i}$ is
a forcing basis for $H_{i}$, then $|Z_{i}(a)|\geq |B_{i}|$.
\end{Theorem}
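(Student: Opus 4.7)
The plan is to exploit the very restricted adjacency structure of the layers $H_i(a)$ inside $G\circ H$. A vertex $(a,v)\in H_i(a)$ has only two kinds of neighbors: the vertices $(a,w)$ with $vw\in E(H_i)$, which lie in the same layer $H_i(a)$ (by Observation \ref{obs1} layers $H_i(a)$ and $H_j(a)$ with $i\neq j$ are not adjacent because $v,w$ in different components of $H$ cannot satisfy $vw\in E(H)$), and the vertices $(b,w)$ with $b\in N_G(a)$, $w\in V(H)$ arbitrary. In particular, any vertex $(c,u)$ with $c\in N_G(a)$ is adjacent to every vertex of the whole ``column'' $H(a)$.

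For part (i), I would argue by contradiction: assume $Z_i(a)=\emptyset$ and let $(a,v_0)$ be the first vertex of $H_i(a)$ that gets blackened during the zero forcing process. Its forcer cannot belong to $H_i(a)$ itself, since no vertex of $H_i(a)$ is black at that moment, so the forcer is some $(c,u)$ with $c\in N_G(a)$. But then $(c,u)$ is adjacent to every vertex of $H_i(a)$, and the color-change rule demands that every other vertex of $H_i(a)$ be already black, contradicting $m_i\ge 2$ and the choice of $(a,v_0)$ as the first blackened vertex of $H_i(a)$.

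For part (ii), the same observation gives the key structural fact: an external force of a vertex of $H_i(a)$ by a vertex $(c,u)$ with $c\in N_G(a)$ can happen only when every other vertex of $H(a)$—in particular, every other vertex of $H_i(a)$—is already black. Hence at most one vertex of $H_i(a)$ is ever forced from outside the layer, and if this occurs at all it is the last vertex of $H_i(a)$ to change color. All remaining vertices of $H_i(a)\setminus Z_i(a)$ must be forced by some $(a,u)\in H_i(a)$, and the color-change rule in $G\circ H$ for such an internal force requires, among other things, every other neighbor of $(a,u)$ inside $H_i(a)$ to be black—this is precisely the color-change rule for the force $u\to v$ in $H_i$ alone. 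Identifying $H_i(a)$ with $V(H_i)$ in the natural way, I would then replay these internal forces in $H_i$ in the same order, starting from $Z_i(a)$: each step is legitimate in $H_i$, and after performing them every vertex of $V(H_i)$ is black except possibly one vertex $v^*$ (the vertex of $H_i(a)$ forced externally in $G\circ H$, if any such force occurred). Since $H_i$ is connected with $m_i\ge 2$, at this stage any neighbor of $v^*$ in $H_i$ has $v^*$ as its unique white neighbor, so $v^*$ is also forced. Therefore $Z_i(a)$ is a zero forcing set of $H_i$, which gives $|Z_i(a)|\ge Z(H_i)=|B_i|$. The step that requires the most care is precisely the bound of at most one external force per layer, because this is what allows the in-layer dynamics to be faithfully simulated inside $H_i$ and makes the zero-forcing-set conclusion go through.
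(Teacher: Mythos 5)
Your proof is correct, and it is in fact more complete than the paper's own argument, which follows the same general line (forces into a layer $H_i(a)$ must essentially come from inside that layer) but compresses the key steps to the point of leaving real gaps. For part (i) the paper merely invokes Observation \ref{obs1} and declares a contradiction; your version supplies the actual reason, namely that any forcer from a neighboring column $H(c)$, $c\sim a$, is adjacent to \emph{all} of $H(a)$ and so cannot act while at least two vertices of $H_i(a)$ are white. For part (ii) the paper asserts that $|Z_{i}(a)|<|B_{i}|$ forces ``each black vertex in $H_{i}(a)$ to have more than one white neighbors,'' which does not follow from the cardinality hypothesis alone, and it only rules out forcers from $H_{j}(v)$ with $j\neq i$, overlooking forcers from $H_{i}(b)$ and $H_j(b)$ with $b\sim a$. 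Your argument closes both gaps: the observation that a cross-column vertex can force into $H(a)$ at most once, and only as the very last step in that column, is exactly what is needed to reduce the in-layer dynamics to a legitimate forcing process in $H_i$ starting from $Z_i(a)$, with the connectivity of $H_i$ and $m_i\geq 2$ mopping up the one possibly externally forced vertex. This yields the stronger and cleaner conclusion that $Z_i(a)$ is itself a zero forcing set of $H_i$, from which $|Z_i(a)|\geq Z(H_i)=|B_i|$ is immediate.
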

\begin{proof}
Suppose that for some $i\in \{1,2,\ldots,k\}$ there exists a vertex
$a\in V(G)$ such that $Z_{i}(a)=\phi$. Then, by Observation
\ref{obs1} any vertex in $H_{i}(a)$ cannot be forced by any vertex
in $H_{j}(b)$, $i\neq j$ for any $a\neq b\in V(G)$, a contradiction.
\par Now suppose that $|Z_{i}(a)|<|B_{i}|$ and
$Z_{i}(a)=\{(a,z_{1}),(a,z_{2}),\ldots,(a,z_{t})\}$ for some forcing
basis $B_{i}$ of $H_{i}$, where $\{z_{1},z_{2},\ldots,z_{t}\}\subset
V(H_{i})$. Then, each black vertex in $H_{i}(a)$ has more than one
white neighbors and no vertex of $H_{i}(a)$ can be forced by any
vertex in $H_{j}(v)$ for any $v\in V(G)$, $i\neq j$. Hence,
$|Z_{i}(a)|\geq |B_{i}|$.
\end{proof}
From above theorem, we have an immediate corollary:
\begin{Corollary}
Let $G$ be a connected graph and $H$ be an arbitrary  graph
containing $k\geq 1$ components $H_{1},H_{2},H_{3},\ldots H_{k}$ and
$m_i\geq 2$. Let $Z(a)=\bigcup_{1\leq i\leq k} Z_{i}(a)$ for $a\in
V(G)$. Then $Z(a)$ is a zero forcing set of $H(a)$.
\end{Corollary}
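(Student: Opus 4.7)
The plan is to show that $Z(a) = \bigcup_{1\leq i\leq k} Z_i(a)$ is a zero forcing set of the layer $H(a) \cong H$ by simulating, inside $H(a)$ alone, the part of the $G\circ H$ forcing process that acts on $H(a)$. Since $Z$ is a zero forcing set of $G\circ H$, every vertex of $H(a)$ is eventually colored black in that process. I would classify each force whose forced vertex lies in $H(a)$ as \emph{internal} (the forcing vertex also lies in $H(a)$) or \emph{external} (the forcing vertex lies outside $H(a)$).

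The key structural fact is that external forces on $H(a)$ are severely restricted. By Observation \ref{obs1}, any vertex $u$ outside $H(a)$ adjacent to any vertex of $H(a)$ is actually adjacent to \emph{every} vertex of $H(a)$, so for $u$ to force a vertex $w \in H(a)$ the entire set $H(a)\setminus\{w\}$ must already be black. Hence at most one external force is ever applied to $H(a)$, and if it is applied it is the last force on any vertex of $H(a)$. Thus every vertex of $H(a)$ that is colored before that (possibly empty) external step is colored by an internal force. I would replay these internal forces, in their original order, inside $H(a)$ starting from $Z(a)$. Because each internal force $v \to w$ in $G\circ H$ requires $w$ to be the unique white neighbor of $v$ in \emph{all} of $G\circ H$---a condition strictly stronger than needing $w$ to be the unique white neighbor of $v$ inside $H(a)$---every such step is a valid force in $H(a)$; a short induction on the step index shows that the black set inside $H(a)$ evolves identically in both processes up to the external step. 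The $H(a)$-simulation therefore colors every vertex of $H(a)$ except possibly the single vertex $w$ that was externally forced in $G\circ H$.

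For that last vertex $w$, write $w \in H_i(a)$. Since $H_i$ is a connected component with $m_i \geq 2$, $w$ has at least one neighbor $v \in H_i(a) \subseteq H(a)$. At this moment $v$ is black and $w$ is the unique white vertex of $H(a)$, so the force $v \to w$ is valid inside $H(a)$ and finishes the process, showing that $Z(a)$ is a zero forcing set of $H(a)$. The only real obstacle is this last step, which is resolved precisely by the hypothesis $m_i \geq 2$ together with the connectedness of each component of $H$.
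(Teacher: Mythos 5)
Your proof is correct, and it is in fact more complete than the paper's own treatment. The paper offers no separate argument for this corollary: it declares it an ``immediate'' consequence of Theorem \ref{thm1}, whose stated conclusion is only that $Z_{i}(a)\neq \phi$ and $|Z_{i}(a)|\geq |B_{i}|$ --- and a set merely having at least $Z(H_{i})$ vertices in each $H_{i}(a)$ is of course not enough to make it a zero forcing set, so the derivation as written has a gap that your argument closes. The substantive mechanism you invoke --- by Observation \ref{obs1} any vertex outside $H(a)$ with a neighbor in $H(a)$ is adjacent to \emph{all} of $H(a)$, so an external force into $H(a)$ can occur only when a single white vertex remains there --- is the same one implicit in the proof of Theorem \ref{thm1}, but you push it to its correct conclusion: at most one vertex of $H(a)$ is forced externally and it must be the last, every earlier force into $H(a)$ is internal and replays verbatim inside the layer because the uniqueness condition in $G\circ H$ is stronger than the corresponding condition in $H(a)$, and the final vertex is recoverable internally because $m_{i}\geq 2$ and the connectedness of the component $H_{i}$ guarantee it has a black neighbor inside $H_{i}(a)$. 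Your route buys an actual proof of the corollary (and, as a byproduct, a cleaner justification of the inequality $|Z_{i}(a)|\geq |B_{i}|$ itself); the paper's route buys only brevity.
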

\begin{Proposition} Let $G$ be a connected graph and $H$
be an arbitrary graph containing $k\geq1$ components
$H_{1},H_{2},\cdots,H_{k}$ and $m_i\geq2$. Let $a\in V(G)$ and $Z$
be a forcing basis for $G\circ H$. If $Z(a)=Z\cap H(a)$ and
$\alpha(a)=|Z(a)|$. Then $$\alpha(a)\leq \sum_{i=1}^{k}m_{i}.$$
\end{Proposition}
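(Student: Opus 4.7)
My plan is straightforward and amounts to a cardinality observation combined with unraveling the definitions. I would first note that $Z(a) = Z \cap H(a)$ gives the trivial containment $Z(a) \subseteq H(a)$, and hence
$$\alpha(a) = |Z(a)| \leq |H(a)|.$$
Next, from the setup of the lexicographic product, $H(a) = \{(a,v) : v \in V(H)\}$ is in natural bijection with $V(H)$, so $|H(a)| = |V(H)|$. Since the components $H_1, \ldots, H_k$ partition $V(H)$ with $|V(H_i)| = m_i$, we obtain $|V(H)| = \sum_{i=1}^{k} m_i$, and combining these gives the desired inequality.

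I do not anticipate any substantive obstacle, since no forcing property of $Z$ is used beyond the inclusion $Z \subseteq V(G \circ H)$. The role of this proposition seems to be to pair with Theorem \ref{thm1} (which furnished the per-layer lower bound $|Z_i(a)| \geq |B_i|$) and to record a matching per-layer ceiling. Summing $\alpha(a) \leq \sum_{i=1}^{k} m_i$ over $a \in V(G)$ would yield $Z(G \circ H) \leq n\sum_{i=1}^{k} m_i$, one shy of the sharper global upper bound $n\sum_{i=1}^{k} m_i - k$ announced in the abstract. Strengthening to that sharp version would presumably require exhibiting, via Observation \ref{obs1} and the adjacency between layers $H(a) \sim H(b)$, at least one layer in which the inclusion $Z(a) \subseteq H(a)$ is strict — an argument that is not needed for the bare statement proved here.
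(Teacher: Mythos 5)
Your proof is correct: the stated conclusion really does follow from nothing more than $Z(a)=Z\cap H(a)\subseteq H(a)$ and $|H(a)|=|V(H)|=\sum_{i=1}^{k}m_i$, and you are right that no forcing property of $Z$ (nor connectivity of $G$) is needed for the inequality as literally written. However, this is a genuinely different route from the paper's. The paper's proof does not argue via $|Z(a)|\le|H(a)|$ at all; it examines the degree formula $deg_{G\circ H}(b,x)=\sum_{b\sim u}|H(u)|+deg_{H}(x)$ and argues that for the very first force to occur, some black vertex $(b,x)$ must have all but one of its neighbors black, which (since its neighborhood contains the entire layer $H(v)$ for every $v\sim b$) forces an entire layer $H(v)$ to be contained in $Z$, i.e.\ $Z\cap H(v)=H(v)$. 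That argument is really the substance behind Corollary \ref{cor1} (the existence of a vertex $x$ with $\alpha(x)=\sum_{i=1}^{k}m_i$), which the paper then cites together with Lemma \ref{lem 3} to get the global lower bound $(n-1)k+\sum m_i$; the upper bound $\alpha(a)\le\sum m_i$ is, as you observe, a triviality tacked on at the end. So your proof buys economy and makes the logical status of the statement transparent, but if it were substituted wholesale for the paper's, Corollary \ref{cor1} would be left unsupported, since the saturated-layer existence claim does not follow from the cardinality observation. Your closing remarks about the relation to Theorem \ref{thm1} and the sharp upper bound $n\sum m_i-k$ are sensible but tangential; the proposition's actual downstream use in the paper is the lower bound via Corollary \ref{cor1}, not the upper bound.
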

\begin{proof} For any $(b,x)\in V(G\circ H)$, $deg_{G\circ H}(b,x)=\sum\limits_{b\sim
u}|H(u)|+deg_{H}(x)$. Since $G$ is connected so for any $b\in V(G)$,
there exist at least one vertex $v\in V(G)$ such that $b\sim v$ and
$deg_{G\circ H}(b,x)\geq |H(v)|+deg_{H}(x)$. To start the zero
forcing process at least all the vertices of $H(v)$ along with
$deg_{H}(x)$ vertices are initially colored black. Since $Z$ is a
forcing basis for $G\circ H$. Hence, $Z\cap H(v)=H(v)$ and
$\alpha(a)\leq \sum\limits_{i=1}^{k}m_{i}$ for any $a\in V(G)$.
\end{proof}
\begin{Corollary} \label{cor1} Let $G$ be a connected graph and $H$
be an arbitrary graph containing $k\geq1$ components
$H_{1},H_{2},\cdots,H_{k}$ and $m_i\geq2$. Then there exists at
least one vertex $x\in V(G)$ such that
$\alpha(x)=\sum\limits_{i=1}^{k}m_{i}$.
\end{Corollary}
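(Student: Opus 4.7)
The plan is to read off the desired vertex directly from the proof of the preceding proposition. There, the bound $\alpha(a)\le \sum_{i=1}^k m_i$ was justified by observing that in order for the zero forcing process in $G\circ H$ to start at all, some vertex $(b,x)\in Z$ must perform the first force, and this requires all but one of its neighbors in $G\circ H$ to lie in $Z$. Since the neighborhood of $(b,x)$ consists of the $\deg_H(x)$ $H$-neighbors of $x$ inside $H(b)$ together with the entire layer $H(v)$ for every $v\sim_G b$, this argument singles out a $G$-neighbor $v$ of $b$ satisfying $Z\cap H(v)=H(v)$.

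With such a $v$ in hand, the corollary is immediate: setting $x=v$ one has $Z(x)=Z\cap H(x)=H(v)$, and therefore $\alpha(x)=|H(v)|=\sum_{i=1}^k m_i$, which is precisely the upper bound of the proposition. So the entire content of the corollary is the constructive assertion that the proof of the previous proposition produces a vertex for which that upper bound is attained.

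The step that most needs careful justification is why $Z$ must contain the \emph{full} layer $H(v)$ rather than merely $|H(v)|-1$ of its vertices (the missing one being the target of the first force). My approach is a brief case analysis on the location of the first forced vertex in $G\circ H$: if it lies inside $H(b)$, then $H(u)$ is entirely contained in $Z$ for every $u\sim_G b$ and we are done; if it lies inside some $H(v)$ with $v\sim_G b$, then the layers $H(u)$ with $u\sim_G b$ and $u\neq v$ are entirely contained in $Z$, which already suffices provided $b$ has a second $G$-neighbor. The remaining subcase, in which $b$ is an end-vertex of $G$ with unique neighbor $v$, is the main obstacle; I would resolve it by iterating the same analysis on the next force in the forcing chain, using connectedness of $G$ together with $|V(G)|\ge 2$ to guarantee that some application of the color-change rule produces a fully contained layer.
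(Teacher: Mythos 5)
You have correctly located the weak point: the paper offers no proof of this corollary at all, and the preceding Proposition's proof contains precisely the unjustified assertion you isolate, namely that ``to start the zero forcing process at least all the vertices of $H(v)$\dots are initially colored black,'' whence $Z\cap H(v)=H(v)$. Your first two cases are sound: if the first force $(b,y)\rightarrow (c,w)$ stays inside $H(b)$, every layer $H(u)$ with $u\sim_G b$ lies in $Z$; if it lands in $H(v)$ with $v\sim_G b$ and $b$ has a second neighbor $u$, then $H(u)\subseteq Z$. The problem is the last subcase, and your proposed repair --- iterating the same analysis on the next force --- does not go through. After the first force the set of black vertices strictly contains $Z$, so the observation ``all neighbors of the forcing vertex except one are black'' no longer yields layers contained in the \emph{initial} set $Z$, which is what $\alpha$ counts. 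The induction you hope for has nothing to propagate.

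In fact the subcase cannot be closed because the statement is false there. Take $G=P_2$ with vertices $u_1,u_2$ (so both are end vertices) and $H=P_3$ with path $v_1v_2v_3$, so $k=1$ and $m_1=3$. Every vertex of $P_2\circ P_3$ has degree at least $4$, so $Z(P_2\circ P_3)\geq \delta(P_2\circ P_3)=4$, and the set $Z=\{(u_1,v_1),(u_1,v_2),(u_2,v_1),(u_2,v_2)\}$ of size $4$ is a zero forcing set: $(u_1,v_1)$ has unique white neighbor $(u_2,v_3)$ (note $(u_1,v_1)\nsim(u_1,v_3)$), so $(u_1,v_1)\rightarrow(u_2,v_3)$, and then $(u_1,v_2)\rightarrow(u_1,v_3)$. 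Hence $Z$ is a forcing basis with $\alpha(u_1)=\alpha(u_2)=2<3=m_1$, and no vertex $x$ attains $\alpha(x)=\sum_i m_i$. So the corollary, read (as the subsequent lower-bound lemma requires) as a statement about every forcing basis, is not provable as stated; both your argument and the paper's would need the hypotheses strengthened (for instance, excluding the configuration where the forcing vertex's unique $G$-neighbor absorbs the first force) or the lower bound must be derived by other means.
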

The projection of $S\subseteq V(G \circ H)$ onto $G$, denoted by
$P_{G}(S)$, is the set of vertices $a\in V(G)$ for which there
exists a vertex $(a,v)\in S$. Similarly, the projection of
$S\subseteq V(G \circ H)$ onto $H$, $P_{H}(S)$, is the set of
vertices $v\in V(H)$ for which there exists a vertex $(a,v)\in S$.
\begin{Lemma}\label{lem 3} Let $G$ be a connected graph of order $n$ and $H$
be an arbitrary graph containing $k\geq1$ components
$H_{1},H_{2},\cdots,H_{k}$ and $m_i\geq2$. Let $Z$ be a forcing
basis for $G \circ H$ and $Z_{i}=Z\cap V(G\circ H_{i})$, where
$G\circ H_{i}$ is the induced subgraph of $G \circ H$. Then
$P_{G}(Z_{i})=V(G)$.
\end{Lemma}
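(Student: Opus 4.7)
The plan is to obtain this lemma as a direct unpacking of Theorem \ref{thm1}, together with the definition of the projection map $P_{G}$. The key observation is that the two objects $Z_{i}(a) = Z \cap H_{i}(a)$ appearing in Theorem \ref{thm1} and $Z_{i} = Z \cap V(G \circ H_{i})$ appearing in the lemma are related by the identity
\[
Z_{i} \;=\; \bigcup_{a \in V(G)} Z_{i}(a),
\]
because $V(G \circ H_{i}) = \bigsqcup_{a \in V(G)} H_{i}(a)$ as a set. Hence a vertex $a \in V(G)$ lies in $P_{G}(Z_{i})$ if and only if $Z_{i}(a) \neq \emptyset$.

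First I would fix an arbitrary $a \in V(G)$ and an arbitrary $i \in \{1,2,\ldots,k\}$. Since $Z$ is a forcing basis of $G \circ H$ (in particular, a zero forcing set), the hypotheses of Theorem \ref{thm1} are satisfied, so that theorem yields $Z_{i}(a) \neq \emptyset$. Pick any element $(a,v) \in Z_{i}(a)$; then $v \in V(H_{i})$ and $(a,v) \in Z$, so $(a,v) \in Z \cap V(G \circ H_{i}) = Z_{i}$. By the definition of the projection, $a \in P_{G}(Z_{i})$. Since $a$ was arbitrary in $V(G)$, this gives the inclusion $V(G) \subseteq P_{G}(Z_{i})$, and the reverse inclusion $P_{G}(Z_{i}) \subseteq V(G)$ is immediate from the definition of $P_{G}$. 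Hence $P_{G}(Z_{i}) = V(G)$.

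Since this is essentially a two-line rewrite, there is no substantial obstacle; the only thing to be careful about is the index bookkeeping, namely not conflating $Z_{i}(a)$ (a subset of the single layer $H_{i}(a)$) with $Z_{i}$ (which ranges over the whole $V(G \circ H_{i})$), and ensuring that the assumption $m_{i} \geq 2$ (needed to invoke Theorem \ref{thm1}) is carried through. No use of the connectedness of $G$ is needed beyond what is already built into Theorem \ref{thm1}.
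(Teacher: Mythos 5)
Your proof is correct and follows the same route as the paper: both arguments reduce the lemma to the fact from Theorem \ref{thm1} that $Z\cap H_{i}(a)\neq\emptyset$ for every $a\in V(G)$, the only difference being that the paper phrases this as a proof by contradiction while you argue directly. The bookkeeping identity $Z_{i}=\bigcup_{a\in V(G)}Z_{i}(a)$ that you make explicit is used implicitly in the paper's proof as well.
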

\begin{proof} Let $V(G)=\{u_{1},u_{2},\cdots,u_{n}\}$ and
$V(H_{i})=\{v_{1}^{i},v_{2}^{i},\cdots,v_{m_{i}}^{i}\}$ for $1\leq
i\leq k$. Suppose $P_{G}(Z_{i})\neq V(G)$, i.e there exists a vertex
$u_{j}\in V(G)$ such that $u_{j}\not \in P_{G}(Z_{i})$. This implies
$(u_{j},v_{p}^{i})\not \in Z$ for any $v_{p}^{i}\in V(H_{i})$ for
$1\leq p\leq m_{i}$. Hence, $H_{i}(u_{j})\cap Z=\phi$, a
contradiction by Theorem \ref{thm1}.
\end{proof}

\begin{Lemma} Let $G$ be a connected graph of order $n$ and $H$
be an arbitrary graph containing $k\geq1$ components
$H_{1},H_{2},\cdots,H_{k}$ and $m_i\geq2$. Then
$$Z(G\circ H)\leq n(\sum _{i=1}^{k}m_{i})-k.$$
\end{Lemma}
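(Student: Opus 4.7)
The plan is to exhibit an explicit zero forcing set $S \subseteq V(G \circ H)$ of size $n\bigl(\sum_{i=1}^{k} m_i\bigr) - k$. Fix an arbitrary vertex $a \in V(G)$ and, for each component $H_i$ of $H$, choose a vertex $v_i \in V(H_i)$. I propose to take
\[
S = V(G \circ H) \setminus \{(a, v_1), (a, v_2), \ldots, (a, v_k)\},
\]
so that precisely $k$ vertices, all lying in the single layer $H(a)$ and one per component of $H$, remain white. This gives $|S| = n\bigl(\sum_{i=1}^{k} m_i\bigr) - k$ immediately.

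The core step is verifying that $S$ forces. Since $H_i$ is connected and $m_i \geq 2$, the chosen vertex $v_i$ has at least one neighbor $w_i$ in $H_i$, and $(a, w_i) \in S$. By the definition of the lexicographic product, the neighbors of $(a, w_i)$ split into two types: those of the form $(a, x)$ with $x \sim w_i$ in $H$, which all lie in $H_i(a)$ because $H_i$ is a component of $H$; and those of the form $(b, x)$ with $ab \in E(G)$, which lie in layers $H(b)$ with $b \neq a$. Among the first type the only white vertex is $(a, v_i)$, because the other white vertices $(a, v_j)$ for $j \neq i$ lie in $H_j(a)$ and hence are non-adjacent to $w_i$. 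Vertices of the second type are all in $S$, since every white vertex lies in $H(a)$. Thus $(a, w_i)$ has the unique white neighbor $(a, v_i)$, and we obtain the force $(a, w_i) \to (a, v_i)$.

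Performing this force for each $i \in \{1, \ldots, k\}$ independently turns every vertex of $G \circ H$ black, so $S$ is a zero forcing set with $|S| = n\bigl(\sum_{i=1}^{k} m_i\bigr) - k$, giving the stated bound. No serious obstacle arises; the single idea making the argument clean is concentrating all white vertices in one layer $H(a)$, so that white vertices belonging to distinct components of $H$ shield one another from premature interference and the $k$ forces can be executed independently within that layer.
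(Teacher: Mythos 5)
Your proposal is correct and is essentially the paper's own argument: both delete one vertex per component of $H$ from a single layer $H(a)$ and then force each deleted vertex via a neighbor inside its own component, using that white vertices from distinct components are non-adjacent in that layer. The only differences are notational (a generic $a$ and $v_i$ versus the paper's $u_1$ and $v_2^i$), and your writeup is if anything more careful about checking that each forcing vertex has a unique white neighbor.
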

\begin{proof} Let $V(G)=\{u_{1},u_{2},\cdots,u_{n}\}$ and
$V(H_{i})=\{v_{1}^{i},v_{2}^{i},\cdots,v_{m_{i}}^{i}\}$ for $1\leq
i\leq k$. We define $Z=V(G\circ H)\setminus\{(u_{1},v_{2}^{i})|1\leq
i\leq k\}$ and $|Z|=n(\sum\limits_{i=1}^{k}m_{i})-k$. We claim that
$Z$ is a zero forcing set of $G\circ H$. To prove the claim, assume
that $Z$ is initially colored black. Since for any $i\neq j$,
$v_{p}^{i}\not \sim v_{q}^{j}$ in $H$ hence $(u_{r},v_{p}^{i})\not
\sim (u_{r},v_{q}^{j})$, $1\leq r\leq n$, in $G\circ H$. Since for
any $i$, $H_{i}$ is connected, so there exists at least one vertex
$v_{l}^{i}$ such that $v_{2}^{i}\sim v_{l}^{i}$ in $H_i$ and hence
$(u_{r},v_{2}^{i})\sim (u_{r},v_{l}^{i})$, $1\leq r\leq n$, in
$G\circ H$. Therefore,
$(u_{1},v_{l}^{i})\rightarrow(u_{1},v_{2}^{i})$ for $1\leq r\leq n$.
Hence, $Z(G\circ H)\leq n(\sum\limits _{i=1}^{k}m_{i})-k.$
\end{proof}
This bound is sharp for $G=K_{n}$ and
$H=K_{m_{1}},K_{m_{2}},\cdots,K_{m_{k}}.$
\begin{Lemma} Let $G$ be a connected graph of order $n$ and $H$
be an arbitrary graph containing $k\geq1$ components
$H_{1},H_{2},\cdots,H_{k}$ and $m_i\geq2$. Then
$$Z(G\circ H)\geq (n-1)k+\sum _{i=1}^{k}m_{i}.$$
\end{Lemma}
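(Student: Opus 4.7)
The plan is to decompose any zero forcing set $Z\subseteq V(G\circ H)$ along the $G$-indexed layers and to bound each contribution separately. Since $\{H(a):a\in V(G)\}$ partitions $V(G\circ H)$, we have $|Z|=\sum_{a\in V(G)}|Z\cap H(a)|$, so it suffices to produce a uniform lower bound on $|Z\cap H(a)|$ valid at every vertex of $G$, together with a substantially stronger bound available at a single distinguished vertex.

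The uniform bound is already packaged in Theorem \ref{thm1}: for each $a\in V(G)$ and each component index $i\in\{1,\ldots,k\}$, the set $Z_i(a)=Z\cap H_i(a)$ is non-empty. Because the layers $H_1(a),\ldots,H_k(a)$ are pairwise disjoint and exhaust $H(a)$, summing over $i$ gives $|Z\cap H(a)|=\sum_{i=1}^{k}|Z_i(a)|\geq k$ for every $a\in V(G)$. For the stronger bound at a distinguished vertex I would appeal to Corollary \ref{cor1}, which supplies some $x\in V(G)$ with $|Z\cap H(x)|=\alpha(x)=\sum_{i=1}^{k}m_i$; equivalently the full layer $H(x)$ lies in $Z$. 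Splitting the sum at this vertex then yields
$$|Z|=|Z\cap H(x)|+\sum_{\substack{a\in V(G)\\ a\neq x}}|Z\cap H(a)|\;\geq\;\sum_{i=1}^{k}m_i+(n-1)k,$$
which is exactly the desired inequality.

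Once Theorem \ref{thm1} and Corollary \ref{cor1} are in hand, the argument is essentially bookkeeping and I do not foresee a substantive obstacle; the one point requiring care is that the two estimates must be applied to disjoint portions of $Z$ so that no vertex is counted twice, and this is automatic from the layer decomposition. It is worth noting that the argument only uses the weaker half of Theorem \ref{thm1} (non-emptiness of each $Z_i(a)$), not the stronger lower bound $|Z_i(a)|\geq |B_i|$; this reflects the fact that the inequality claimed in the statement is deliberately stated in the cleanest form using $k$ rather than $\sum_i Z(H_i)$.
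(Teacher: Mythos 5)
Your proof is correct and is essentially the paper's own argument: the paper disposes of this lemma in one line by citing Corollary \ref{cor1} (a distinguished vertex $x$ with the whole layer $H(x)$ in $Z$) together with Lemma \ref{lem 3} (which is exactly the non-emptiness statement $Z_i(a)\neq\phi$ from Theorem \ref{thm1} that you invoke), and you have simply written out the layer-by-layer counting that the paper leaves implicit. No substantive difference.
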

\begin{proof} The result follows from Corollary \ref{cor1} and
Lemma \ref{lem 3}.
\end{proof}
This bound is sharp for $G=K_{1,n-1}$ and
$H=P_{m_{1}},P_{m_{2}},\cdots,P_{m_{k}}$.
\begin{Lemma} Let $G$ be a connected graph of order $n$ and $H_{1}, H_{2},\cdots,
H_{k}$, $k\geq 2$ are singleton components. Then $Z(G\circ H)\leq
nk-2$.
\end{Lemma}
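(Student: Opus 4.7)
The plan is to exhibit an explicit zero forcing set of $G\circ H$ of cardinality $nk-2$. Because every component $H_i$ is a singleton, $H$ is the empty graph on $k$ vertices $v_1,\ldots,v_k$, so each layer $H(u_i)$ of $G\circ H$ is an independent set and the neighborhood of $(u_i,v_p)$ equals $\bigcup_{u_j\sim u_i}H(u_j)$, depending only on the first coordinate. Since $G$ is connected and of order $n\geq 2$, I would fix an edge $u_1u_2\in E(G)$ and define
\[
Z=V(G\circ H)\setminus\{(u_1,v_1),(u_2,v_1)\},
\]
a set of size $nk-2$ with exactly two white vertices.

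To verify that $Z$ forces, first observe that the black vertex $(u_1,v_2)$, which exists because $k\geq 2$, has all its neighbors in $\bigcup_{u_j\sim u_1}H(u_j)$; the white vertex $(u_1,v_1)$ lies in the layer $H(u_1)$ and is therefore \emph{not} a neighbor of $(u_1,v_2)$, so the unique white neighbor of $(u_1,v_2)$ is $(u_2,v_1)$. Hence $(u_1,v_2)\rightarrow(u_2,v_1)$. Symmetrically, the black vertex $(u_2,v_2)$ has all its neighbors in $\bigcup_{u_j\sim u_2}H(u_j)$, and the remaining white vertex $(u_1,v_1)$ is a neighbor by virtue of $u_1\sim u_2$, so $(u_2,v_2)\rightarrow(u_1,v_1)$. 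The entire vertex set is now black, which shows $Z$ is a zero forcing set and gives $Z(G\circ H)\leq nk-2$.

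The main subtlety to watch is that, because $H$ is edgeless, no vertex of a layer $H(u)$ can ever be forced from within $H(u)$; every forcing step must originate in an adjacent layer. Placing the two white vertices in the same second coordinate $v_1$ but in two adjacent layers $H(u_1)$ and $H(u_2)$ is precisely what couples the two forcing steps: each white vertex becomes the sole white neighbor of a black ``partner'' sitting in the opposite layer, and the hypothesis $k\geq 2$ supplies that partner $(u_i,v_2)$ inside each of the two layers.
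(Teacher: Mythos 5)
Your proof is correct and takes essentially the same approach as the paper's: both delete two vertices with the same $H$-coordinate from two adjacent layers $H(u_1)$ and $H(u_2)$, then force each white vertex from a black vertex in the opposite layer, using that $H$ is edgeless so same-layer vertices are non-adjacent. The paper's choice of white vertices $(u_1,x_k),(u_2,x_k)$ versus your $(u_1,v_1),(u_2,v_1)$ is purely notational, and your write-up is if anything more explicit about why each forcing vertex has a unique white neighbor.
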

\begin{proof}Let $V(G)=\{u_{1},u_{2},\cdots,u_{n}\}$ and
$V(H_{i})=\{x_{i}\}$ for $1\leq i\leq k$. We define, for $u_{1}\sim
u_{2}$, $Z=V(G\circ H)\setminus\{(u_{1},x_{k}),(u_{2},x_{k})\}$ with
$|Z|=nk-2$. Since $x_{1} \not \sim x_{k}$ so $(u_{1},x_{1})
\rightarrow (u_{2},x_{k})$. Similarly, $(u_{2},x_{1})\rightarrow
(u_{1},x_{k})$. Now all the vertices in $G\circ H$ are colored
black. Therefore, the result follows.
\end{proof}
Now we study the zero forcing number of lexicographic product of
graphs for some specific families of graphs and $H$ contains one
component only. Note that $K_m\circ K_n\cong K_{mn}$, so $Z(K_M\circ
K_n)=mn-1$. Therefore, from now on we consider the graphs when at
most one of the factors of the product is a complete graph.
\begin{Lemma}For any connected graph $H$ of order $m$, $Z(K_n\circ
H)=Z(H)+(n-1)m$.
\end{Lemma}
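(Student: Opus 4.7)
The plan is to prove matching upper and lower bounds of $(n-1)m+Z(H)$, using induction on $n$ for the lower direction.

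For the upper bound, I would fix a forcing basis $B$ of $H$ and set $S=H(a_1)\cup\cdots\cup H(a_{n-1})\cup\widetilde{B}$, where $\widetilde{B}\subseteq H(a_n)$ is the copy of $B$, giving $|S|=(n-1)m+Z(H)$. Since the first $n-1$ layers are entirely black, every vertex $(a_n,u)$ in $H(a_n)$ has all of its out-of-layer neighbors already black, so its only possibly white neighbors are its $H$-neighbors within $H(a_n)$. The color-change rule restricted to $H(a_n)$ therefore coincides with the zero forcing process in $H$ starting from $B$, which succeeds.

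For the lower bound I would induct on $n$. The base case $n=2$ follows directly from the earlier results in this section: Corollary \ref{cor1} supplies some layer $H(x)\subseteq Z$ (contributing $m$ vertices), and Theorem \ref{thm1} forces the other layer to contribute at least $Z(H)$, so $|Z|\geq m+Z(H)$. For $n\geq 3$, let $Z$ be a zero forcing set of $K_n\circ H$, apply Corollary \ref{cor1} to obtain a layer $H(x)\subseteq Z$, and put $Z'=Z\setminus H(x)$. Identifying the remaining induced subgraph with $K_{n-1}\circ H$, it suffices to show $Z'$ is a zero forcing set of $K_{n-1}\circ H$; the induction hypothesis then gives $|Z'|\geq(n-2)m+Z(H)$, so $|Z|\geq(n-1)m+Z(H)$.

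The main obstacle is verifying that $Z'$ actually zero-forces $K_{n-1}\circ H$. I would use the following general observation: for any connected graph $F$ with at least two vertices and any $R\subseteq V(F)$, the derived set of $R$ cannot omit exactly one vertex, because the single remaining white vertex would be the unique white neighbor of any of its black neighbors and would therefore be forced. The internal forcing condition in $K_{n-1}\circ H$ (treated as a standalone graph, starting from $Z'$) matches the forcing condition for vertices of $K_{n-1}\circ H$ in the $K_n\circ H$ process from $Z$, because the fully black $H(x)$ contributes no additional white neighbors. By the observation, the derived set of $Z'$ in $K_{n-1}\circ H$ is either $V(K_{n-1}\circ H)$ (done) or omits at least two vertices. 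The second possibility is ruled out as follows: every vertex of $H(x)$ is adjacent to every vertex of $K_{n-1}\circ H$, so it can force inside $K_{n-1}\circ H$ only when a single white vertex remains there. Hence if the internal forcing stalls with two or more whites in $K_{n-1}\circ H$, the whole $K_n\circ H$ process stalls too, contradicting that $Z$ is a zero forcing set. Therefore $Z'$ zero-forces $K_{n-1}\circ H$, completing the induction.
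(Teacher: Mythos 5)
Your proof is correct, and for the lower bound it supplies an argument that the paper does not actually contain. The paper's proof is three sentences: it notes $H(a)\sim H(b)$ for all $b\neq a$, \emph{asserts} that consequently the $n-1$ layers $H(b)$ must all be initially black, and concludes that $Z(H)$ further black vertices are needed in $H(a)$; both directions of the equality are essentially stated rather than derived. Your upper bound (the $n-1$ full layers plus a copy of a forcing basis of $H$, with the check that forcing inside the last layer then replicates forcing in $H$ because all out-of-layer neighbors are black) is the explicit version of what the paper intends. Your lower bound, however, is a genuinely different and more substantial route: you induct on $n$, peel off the fully black layer guaranteed by Corollary \ref{cor1}, and justify that the residual set still forces $K_{n-1}\circ H$ by combining (i) the fact that a vertex of the deleted layer is adjacent to everything outside it and so can only act when a single white vertex remains, with (ii) the observation that a derived set in a connected graph on at least two vertices cannot omit exactly one vertex. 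This is precisely the step the paper glosses over when it claims the $n-1$ layers are ``required,'' and your stalling argument (together with the standard monotonicity of the derived set, which you use implicitly when saying the internal process cannot be overtaken) closes it correctly. What your approach buys is an actual proof of the lower bound at the cost of leaning on Corollary \ref{cor1} for the existence of a fully contained layer; what the paper's approach buys is brevity, at the cost of rigor.
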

\begin{proof} Note that for any $a\in V(K_n)$, $H(a)\sim H(b)$ for all $b\in
V(K_n)\setminus\{a\}$. Therefore all the vertices in $H(b)$ for all
$b\in V(K_n)\setminus\{a\}$ are initially colored black. Now
$H(a)\cong H$ so $Z(H)$ vertices are required as initially colored
black to complete the zero forcing process.
\end{proof}
Since the lexicographic product of graphs is not commutative, i.e
$K_n\circ H \not \cong H\circ K_n$. Therefore we study the case when
the second factor is a complete graph.
\begin{Lemma} For any connected non complete graph $G$ of order $m$, $m(n-1)+1\leq Z(G\circ K_n)\leq
nm-2$.
\end{Lemma}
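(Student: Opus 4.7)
The plan is to prove the two bounds separately, using Theorem \ref{thm1} together with a counting/forcing-obstruction argument for the lower bound, and exhibiting an explicit zero forcing set of size $nm-2$ for the upper bound.

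For the lower bound $Z(G\circ K_n)\geq m(n-1)+1$, first I would apply Theorem \ref{thm1} with $H=K_n$ (one component, whose forcing basis has cardinality $Z(K_n)=n-1$ by Proposition \ref{manha}) to get $|Z\cap K_n(a)|\geq n-1$ for every $a\in V(G)$. Summing over the $m$ layers gives immediately $|Z|\geq m(n-1)$. To push this up by one, I would argue by contradiction: suppose there is a forcing basis $Z$ with $|Z|=m(n-1)$. Then each layer $K_n(a)$ contains exactly one white vertex, call it $(a,w_a)$. Since $G$ is connected and $m\geq 2$, every $a\in V(G)$ has $\deg_G(a)\geq 1$. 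Therefore every black vertex $(a,x)$ has, as white neighbors, at least $(a,w_a)$ in its own layer together with $(b,w_b)$ for each $b\sim a$ in $G$ (layers are fully joined under the lexicographic product), giving at least $1+\deg_G(a)\geq 2$ white neighbors. Hence the color change rule cannot be applied anywhere, so $Z$ cannot be a zero forcing set, a contradiction.

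For the upper bound $Z(G\circ K_n)\leq nm-2$, I would use the hypothesis that $G$ is not complete to pick two vertices $u_1,u_2\in V(G)$ with $u_1\not\sim u_2$. Writing $V(K_n)=\{x_1,\ldots,x_n\}$, define
\[
Z=V(G\circ K_n)\setminus\{(u_1,x_n),(u_2,x_n)\},\qquad |Z|=nm-2.
\]
The only white vertices sit in the two non-adjacent layers $K_n(u_1)$ and $K_n(u_2)$, which are not joined to each other in $G\circ K_n$. Hence any black vertex $(u_1,x_j)$ with $j\neq n$ has $(u_1,x_n)$ as its unique white neighbor (its only other possible white neighbor would lie in $K_n(u_2)$, but $u_1\not\sim u_2$ prevents this). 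Thus $(u_1,x_1)\rightarrow (u_1,x_n)$, and symmetrically $(u_2,x_1)\rightarrow (u_2,x_n)$, so $Z$ is a zero forcing set.

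The main obstacle is really the lower bound, and in particular the jump from the naive $m(n-1)$ to $m(n-1)+1$; the extra $+1$ is forced by the connectedness of $G$, which guarantees that under the minimal distribution of one white vertex per layer no black vertex ever has a unique white neighbor. The upper bound is essentially a one-shot construction that exploits precisely the non-adjacency in $G$ which fails when $G=K_m$ (consistent with $K_m\circ K_n\cong K_{mn}$ having $Z=mn-1$).
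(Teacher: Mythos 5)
Your proof is correct, and your lower-bound argument takes a genuinely different route from the paper's. The paper gets the extra $+1$ by invoking Corollary \ref{cor1}: some layer $K_n(a)$ must be \emph{entirely} black in any forcing basis, contributing $n$ vertices, while the remaining $m-1$ layers contribute at least $n-1$ each by Theorem \ref{thm1}, giving $n+(m-1)(n-1)$. You instead take the uniform bound $m(n-1)$ from Theorem \ref{thm1} and rule out equality directly: if every layer had exactly one white vertex, then (since layers are cliques and $G$ is connected) every black vertex would see at least two white vertices --- one in its own layer and one in an adjacent layer --- so no force could ever fire. Your version is more self-contained, since Corollary \ref{cor1} rests on a proposition whose proof in the paper is rather terse; the paper's version is shorter once that corollary is granted. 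For the upper bound you and the paper use the same construction (all vertices black except one in each of two layers), but you are more careful: you explicitly choose the two layers over a non-adjacent pair $u_1\not\sim u_2$, which is exactly what makes the two forces $(u_1,x_1)\rightarrow(u_1,x_n)$ and $(u_2,x_1)\rightarrow(u_2,x_n)$ legal. The paper removes vertices from the layers of $u_1$ and $u_n$ and merely asserts the check is easy; that choice only works if $u_1\not\sim u_n$, so your explicit use of non-completeness closes a small gap the paper leaves implicit.
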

\begin{proof}
Suppose $V(G)=\{u_1,u_2, \cdots, u_m\}$ and $V(K_n)=\{v_1,v_2,
\cdots, v_n\}$. It is easy to check that $Z=V(G\circ
K_n)\setminus\{(u_1,v_2), (u_n,v_2)\}$ is a zero forcing set of
$G\circ K_n$. Hence $Z(G\circ K_n)\leq nm-2$. \par By Corollary
\ref{cor1}, there exists at least one vertex $a\in V(G)$ such that
$\alpha(a)=n$. Since $H(v)\cong K_n$, therefore for $m-1$ layers
$H(v)$ at least $n-1$ vertices from each layer are required as
initially colored black to color all the vertices of $G\circ K_n$.
Hence $Z(G\circ K_n)\geq n+(m-1)(n-1)=m(n-1)+1$.
\end{proof}
Now we study the zero forcing number of $P_n\circ H$, for $n\geq3$
and a connected graph $H$. Suppose $V(P_n)=\{u_1, u_2,\cdots,
u_n\}$. Since $H(u_i)\sim H(u_{i+1})$, $1\leq i\leq n-1$. Note that
to color the vertices of $H(u_1)$, $Z(H)$ vertices in $H(u_1)$ and
all the vertices in $H(u_2)$ are required as initially colored black
and to color the vertices of $H(v_3)$, $Z(H)$ vertices in $H(v_3)$
and all the vertices in $H(v_4)$ are required as initially colored
black. Continue the process untill all the vertices in $P_n\circ H$
are turned black.
\begin{Proposition} For a connected graph $H$ and $n\geq3$, $$Z(P_n\circ H)=\left\{
  \begin{array}{ll}
    \frac{n(Z(H)+m)}{2}, & \hbox{$n$\mbox{ is even}} \\
   \frac{n(Z(H)+m)+Z(H)-m}{2}, & \hbox{$n$\mbox{ is odd}.}
  \end{array}
\right.$$
\end{Proposition}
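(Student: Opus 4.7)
The plan is to prove the two inequalities $Z(P_n\circ H)\le$~formula and $Z(P_n\circ H)\ge$~formula separately, following the outline sketched in the paragraph immediately preceding the statement. For the upper bound I would exhibit an explicit zero forcing set by alternating ``forcing-basis'' layers and ``full'' layers. Fix once and for all a forcing basis $F\subseteq V(H)$ with $|F|=Z(H)$ and set
\[
 S=\bigcup_{i\text{ odd}}\{(u_i,x):x\in F\}\;\cup\;\bigcup_{i\text{ even}}H(u_i).
\]
Then $|S|=\lceil n/2\rceil\,Z(H)+\lfloor n/2\rfloor\,m$, which is $\frac{n(Z(H)+m)}{2}$ for $n$ even and $\frac{n(Z(H)+m)+Z(H)-m}{2}$ for $n$ odd, matching the target. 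To verify that $S$ is zero forcing I would note that for every odd $i$ the neighbours of any $(u_i,x)\in H(u_i)$ \emph{outside} the layer are exactly the vertices of the existing layers $H(u_{i-1})$ and $H(u_{i+1})$, both of which are entirely contained in $S$. Hence the only possible white neighbours of a black vertex in $H(u_i)$ lie inside $H(u_i)$, and the forcing inside $H(u_i)\cong H$ starting from $F$ colours the whole odd layer. Once this has happened in every odd layer, every vertex of $P_n\circ H$ is black.

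For the lower bound I would let $S$ be any zero forcing set and write $s_i=|S\cap H(u_i)|$. By Theorem~\ref{thm1} (applied to the single component $H$) we already know $s_i\ge Z(H)$ for every $i$. The crucial additional observation is that before any vertex of $H(u_i)\setminus S$ can be forced from \emph{inside} $H(u_i)$, both neighbouring layers $H(u_{i-1})$ and $H(u_{i+1})$ (those that exist) must already be entirely black, since any candidate forcer $(u_i,y)\in H(u_i)$ has the whole of $H(u_{i-1})\cup H(u_{i+1})$ in its neighbourhood and would otherwise see several white neighbours simultaneously. Combined with the fact that only one vertex is coloured per step, this forbids two adjacent layers from both waiting to be forced from inside. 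Pairing the layers as $(u_{2j-1},u_{2j})$ (with a trailing layer when $n$ is odd), each pair must contribute at least $Z(H)+m$ vertices to $|S|$: one member must already be fully black to drive the other, while the other must contain at least a forcing basis. Summing over pairs plus the trailing layer produces the claimed lower bound.

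The step I expect to be the main obstacle is the ``mixed'' case in which some $s_i$ takes an intermediate value strictly between $Z(H)$ and $m$. In that regime a vertex of $H(u_i)$ might be forced \emph{across} the layer boundary by some $(u_{i\pm 1},z)$ whose unique white neighbour lies in $H(u_i)$, and this in turn requires the far layer $H(u_{i\mp 2})$ to be completely black beforehand, creating a chain of dependencies along the path. I would handle this by tracking the ``completion order'' $t_1,\ldots,t_n$ of the layers (the step at which each $H(u_i)$ finishes being blackened) and arguing by induction on $n$: if some boundary layer already has $s_1=m$ or $s_n=m$, strip it off and apply the induction hypothesis to $P_{n-1}\circ H$; otherwise the last layer to become fully black is interior, and a pair of adjacent layers can be peeled off to reduce to $P_{n-2}\circ H$. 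The base cases $n=3,4$ would be verified directly by exhaustive analysis of the possible values of $(s_1,\ldots,s_n)$.
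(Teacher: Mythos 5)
Your upper bound is sound, and it is exactly the construction the paper gestures at in the paragraph preceding the statement (the paper offers no formal proof of this Proposition at all): alternating forcing-basis layers with full layers gives a zero forcing set of the stated size, and your check that the forcing inside each odd layer is unobstructed is correct. The genuine gap is the lower bound, and the ``main obstacle'' you flag is not a technical nuisance to be absorbed by an induction --- it is fatal, because the claimed equality is false for $n\geq 4$. The pairing claim that ``one member must already be fully black to drive the other'' fails precisely because of the cross-layer forces you describe: once $H(u_{i-1})$ and the interior of $H(u_i)$ are entirely black, a vertex of $H(u_i)$ can force the single remaining white vertex of $H(u_{i+1})$, so $H(u_{i+1})$ never needs $m$ initially black vertices, and this saving cascades along the path.

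Concretely, take $n=4$ and $H=P_2$ (so $m=2$, $Z(H)=1$, and the formula predicts $6$). Writing $H(u_i)=\{x_i,y_i\}$, the set $S=\{x_1,x_2,y_2,x_3,x_4\}$ of size $5$ is zero forcing: $x_1\to y_1$ (its only white neighbour), then $x_2\to y_3$, then $x_3\to y_4$. Hence $Z(P_4\circ P_2)\leq 5<6$, matching the paper's own general lower bound $(n-1)k+\sum_{i}m_i=5$. The same phenomenon occurs for non-complete $H$: in $P_4\circ P_3$ the seven-vertex set consisting of one end vertex of the path $H(u_1)$, all of $H(u_2)$, any two vertices of $H(u_3)$ and one end vertex of $H(u_4)$ is zero forcing, against the predicted eight. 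So no proof of the stated equality can exist; only your ``$\leq$'' direction is salvageable, and any correct determination of $Z(P_n\circ H)$ must account for these dominoes running across layer boundaries, which both your pairing argument and the paper's informal count of what is ``required'' ignore.
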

\begin{Corollary} For $n,m\geq 3$,
$$Z(P_n\circ K_m)=\left\{
  \begin{array}{ll}
    \frac{n(2m-1)}{2}, & \hbox{$n$\mbox{ is even}} \\
   nm-\frac{n+1}{2}, & \hbox{$n$\mbox{ is odd}.}
  \end{array}
\right.$$
\end{Corollary}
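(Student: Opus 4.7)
The plan is to derive this corollary by direct substitution into the preceding Proposition, using the known value of $Z(K_m)$. Since the result is labeled a Corollary and sits immediately after the Proposition computing $Z(P_n \circ H)$ for an arbitrary connected graph $H$, the natural strategy is to specialize to $H = K_m$.

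First, I would recall from Proposition \ref{manha}(b) that $Z(K_m) = m-1$, which is valid since $m \geq 3 \geq 2$. I would also note that $K_m$ is a connected graph of order $m$, so the hypotheses of the Proposition are satisfied when we take $H = K_m$ with $n \geq 3$.

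Next, I would split into the two parity cases and substitute $Z(H) = m-1$ into the formula from the Proposition. For $n$ even, the formula gives
\[
Z(P_n \circ K_m) = \frac{n(Z(K_m)+m)}{2} = \frac{n((m-1)+m)}{2} = \frac{n(2m-1)}{2}.
\]
For $n$ odd, the formula gives
\[
Z(P_n \circ K_m) = \frac{n(Z(K_m)+m) + Z(K_m) - m}{2} = \frac{n(2m-1) + (m-1) - m}{2} = \frac{n(2m-1) - 1}{2},
\]
which simplifies to $nm - \tfrac{n+1}{2}$ after splitting the numerator.

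There is no real obstacle here beyond the routine arithmetic check, since everything reduces to substituting $Z(K_m) = m-1$ into the established formula and simplifying. The only subtlety worth flagging is that the Proposition is stated for a general connected graph $H$, so one must simply verify that $K_m$ qualifies, which is immediate.
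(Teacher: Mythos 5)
Your proof is correct and matches the paper's intent exactly: the corollary is stated as an immediate consequence of the preceding Proposition, obtained by substituting $Z(K_m)=m-1$ (from Proposition 2.1(b)) into the two parity cases, and your arithmetic $\frac{n(2m-1)-1}{2}=nm-\frac{n+1}{2}$ for odd $n$ checks out.
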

Now we study the zero forcing number of $C_n\circ H$, for $n\geq4$
and a connected graph $H$. Suppose $V(C_n)=\{u_1, u_2,\cdots,
u_n\}$. Since $H(u_i)\sim H(u_{i+1})$, $1\leq i\leq n$ and
$u_{n+1}=u_1$. Note that to color the vertices of $H(u_1)$, $Z(H)$
vertices in $H(u_1)$ and all the vertices in $H(u_2)$ and $H(u_n)$
are required as initially colored black and to color the vertices of
$H(v_3)$, $Z(H)$ vertices in $H(v_3)$ and all the vertices in
$H(v_4)$ are required as initially colored black. Continue the
process untill all the vertices in $C_n\circ H$ are turned black.
\begin{Proposition} For a connected graph $H$ and $n\geq4$,
$$Z(C_n\circ H)=\left\{\begin{array}{ll}
    \frac{n(Z(H)+m)}{2}, & \hbox{$n$\mbox{ is even}} \\
   \frac{n(m+Z(H))+m-Z(H)}{2}, & \hbox{$n$\mbox{ is odd}.}
  \end{array}
\right.$$
\end{Proposition}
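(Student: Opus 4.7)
The plan is to establish matching upper and lower bounds by exhibiting an alternating ``heavy/light'' layer construction and then arguing that any zero forcing set must have at least this total.

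For the upper bound, I would build an explicit zero forcing set $B$ by alternating \emph{heavy} layers (taking all of $H(u_i)$) with \emph{light} layers (taking only a forcing basis $F_i\subseteq H(u_i)$ of size $Z(H)$). When $n$ is even, alternate perfectly, giving $\tfrac{n}{2}$ heavy and $\tfrac{n}{2}$ light layers and total $\tfrac{n}{2}(m+Z(H))$. When $n=2k+1$ is odd, the alternation forces two consecutive heavy layers (say at positions $n$ and $1$), giving $k+1$ heavy and $k$ light layers and total $(k+1)m+kZ(H)=\tfrac{n(m+Z(H))+m-Z(H)}{2}$. Verifying $B$ is a zero forcing set is straightforward: each light layer $H(u_i)$ has both cyclic neighbors fully black, so by Observation~\ref{obs1} the $C_n\circ H$-neighborhood of any $(u_i,x)$ consists of the $H$-neighbors of $x$ inside $H(u_i)$ together with all of $H(u_{i-1})\cup H(u_{i+1})$; hence an intra-layer color change inside $H(u_i)$ is indistinguishable from an application of the color-change rule in $H$, and $F_i$ forces all of $H(u_i)$.

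For the lower bound, let $B$ be any zero forcing set, set $b_i=|B\cap H(u_i)|$, and let $t_i$ denote the time step at which $H(u_i)$ first becomes fully black (so $t_i=0$ iff $b_i=m$). Theorem~\ref{thm1} gives $b_i\geq Z(H)$ for each $i$. The key structural step is to analyze the final force into $H(u_i)$ at time $t_i$: if it is intra-layer, the color-change rule requires both $H(u_{i-1})$ and $H(u_{i+1})$ to be fully black at time $t_i-1$ (so $t_{i-1},t_{i+1}<t_i$), and the intra-layer closure of $B\cap H(u_i)$ in $H$ must equal $V(H)$, giving $b_i\geq Z(H)$; if it is inter-layer from $H(u_{i\pm 1})$, a short argument ruling out any prior forces in $H(u_i)$ yields $b_i\geq m-1$. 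Consequently, the indices with $b_i=Z(H)$ form strict local maxima of the cyclic integer sequence $(t_1,\ldots,t_n)$, and strict local maxima on $C_n$ form an independent set of size at most $\lfloor n/2\rfloor$.

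Combining these contributions yields $|B|\geq\lfloor n/2\rfloor Z(H)+\lceil n/2\rceil m$, matching the stated value in both parities. The main obstacle is the inter-layer bookkeeping for layers with $b_i=m-1$, which on its face would save one vertex per layer compared to a heavy layer; resolving this requires showing that each such inter-forced layer demands an adjacent layer $H(u_{i\pm 1})$ to contain a vertex $x$ together with its entire closed $H$-neighborhood, and that in a connected graph $H$ no forcing basis can accommodate such a closed neighborhood — so the apparent savings in $H(u_i)$ are compensated by extra cost in the supporting layer. Making this distance-2 support argument precise over all cyclic configurations is the technical heart of the proof.
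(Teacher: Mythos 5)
Your upper bound is correct and is exactly the construction the paper has in mind: the paper offers no proof environment for this proposition at all, only the informal paragraph preceding it, which describes the same alternating heavy/light pattern (all of $H(u_2)$ and $H(u_n)$ black, $Z(H)$ vertices in $H(u_1)$, and so on) and then simply asserts that this many vertices are ``required.'' So on the upper bound you match the paper, and your verification via Observation~\ref{obs1} that a light layer with both cyclic neighbours fully black forces itself exactly as $H$ does is sound; the parity count $\lfloor n/2\rfloor Z(H)+\lceil n/2\rceil m$ agrees with both displayed formulas.

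The gap is in the lower bound, and you have named it yourself without closing it. Your case analysis only yields $b_i\geq m-1$ for a layer whose last white vertex arrives by an inter-layer force (the forcing vertex $(u_{i\pm1},x)$ needs all of $H(u_{i\pm2})$, all of $H(u_{i\pm1})\cap N[x]$, and all but one vertex of $H(u_i)$ black -- nothing in that forces $b_i=m$). Hence the concluding inequality $|B|\geq\lfloor n/2\rfloor Z(H)+\lceil n/2\rceil m$ does not follow from the steps that precede it: a priori one could save one vertex in every inter-forced layer. The compensation argument you gesture at (that the supporting layer must contain a closed $H$-neighbourhood and therefore exceeds $Z(H)$, or even exceeds $m-1$) is the entire content of the lower bound and is not carried out; moreover the claim that the layers with $b_i=Z(H)$ are strict local maxima of $(t_i)$ presumes each such layer is finished purely intra-layer, whereas a layer can receive some inter-layer forces and still finish with an intra-layer one, so even that step needs an argument. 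To be fair, the paper proves none of this either -- its ``proof'' is the one-sentence assertion that the heavy neighbours ``are required'' -- so your proposal is strictly more honest about where the difficulty lies, but as written neither your argument nor the paper's establishes the lower bound.
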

\begin{Corollary} For $n\geq4$ and $m\geq 3$,
$$Z(C_n\circ K_m)=\left\{\begin{array}{ll}
    \frac{n(2m-1)}{2}, & \hbox{$n$\mbox{ is even}} \\
   \frac{n(2m-1)+1}{2}, & \hbox{$n$\mbox{ is odd}.}
  \end{array}
\right.$$
\end{Corollary}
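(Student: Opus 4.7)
The plan is to derive this corollary directly from the preceding proposition on $Z(C_n\circ H)$ by specializing to the case $H=K_m$. By Proposition \ref{manha}(b), we have $Z(K_m)=m-1$ for $m\geq 2$, and since we are given $m\geq 3$ the complete graph $K_m$ is in particular a connected graph with $|V(K_m)|=m$, so the hypotheses of the preceding proposition are satisfied.

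First I would treat the even case. Substituting $Z(H)=m-1$ into $\frac{n(Z(H)+m)}{2}$ yields
\[
Z(C_n\circ K_m)=\frac{n\bigl((m-1)+m\bigr)}{2}=\frac{n(2m-1)}{2},
\]
which is the claimed value; note that for $n$ even this is automatically an integer.

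Next I would handle the odd case. Substituting $Z(H)=m-1$ into $\frac{n(m+Z(H))+m-Z(H)}{2}$ gives
\[
Z(C_n\circ K_m)=\frac{n\bigl(m+(m-1)\bigr)+m-(m-1)}{2}=\frac{n(2m-1)+1}{2},
\]
as required, and this is again an integer since $n(2m-1)$ is odd when $n$ is odd.

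There is no real obstacle here: the entire content of the argument is the evaluation $Z(K_m)=m-1$ combined with the formula already established in the preceding proposition. The only point worth a sentence of care is verifying that the hypothesis $n\geq 4$ in the proposition is matched by the hypothesis $n\geq 4$ of the corollary, and that the restriction $m\geq 3$ (rather than $m\geq 2$) is consistent with the paper's convention that at most one factor of the lexicographic product is a complete graph with the other factor non-complete, so that the formula for $Z(C_n\circ H)$ applies without the degenerate collapse $C_n\circ K_m$ would exhibit for very small $m$.
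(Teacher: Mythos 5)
Your proposal is correct and matches the paper's intent exactly: the corollary is stated as an immediate consequence of the preceding proposition, obtained by substituting $Z(K_m)=m-1$ (from the characterization of graphs with zero forcing number $n-1$) into both the even and odd formulas, which is precisely your computation. No further comment is needed.
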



\begin{thebibliography}{99}
    \bibitem{BBBCCFGHHMNPS}  AIM Minimum Rank - Special Graphs Work Group (F. Barioli, W. Barrett, S. Butler, S. M.
Cioab\u{a}, D. Cvetkovi\'{c}, S. M. Fallat, C. Godsil, W. Haemers,
L. Hogben, R. Mikkelson, S. Narayan, O. Pryporova, I. Sciriha, W.
So, D. Stevanovi\'{c}, H. van der Holst, K. Vander Meulen and A. W.
Wehe). Zero forcing sets and the minimum rank of graphs, Linear
Algebra Appl., $428/7$ $(2008)$ $1628-1648$. \label{3}
    \bibitem{BBFHHSDH}   F. Barioli, W. Barrett, S. M. Fallat, H. T. Hall, L. Hogben, B. Shader, P. van den Driessche and
H. van der Holst, Zero forcing parameters and minimum rank problems,
Linear Algebra Appl., $433$ $(2010)$ $401-411$. \label{iqra}
    \bibitem{BFHRS}  A. Berman, S. Friedland, L. Hogben, U. G. Rothblum and B. Shader, An upper bound for the
minimum rank of a graph, Linear Algebra Appl., $429$ $(2008)$
$1629-1638$. \label{4}
    \bibitem{CEJO} G. Chartrand, L. Eroh, M. A. Johnson and O. R. Oellermann, Resolvability in graphs and the
metric dimension of a graph, Discrete Appl. Math., $105$ $(2000)$
$99-113$.\label{5}
    \bibitem{CDKY}  K. Chilakamarri, N. Dean, C. X. Kang and E. Yi, Iteration index of a zero forcing set in a
    graph, Bull. Inst. Combin. Appl., $64$ $(2012)$ $57-72$. \label{6}
    \bibitem{EHHLR}  C. J. Edholm, L. Hogben, M. Hyunh, J. LaGrange and D. D. Row, Vertex and edge spread of
zero forcing number, maximum nullity, and minimum rank of a graph,
Linear Algebra Appl., $436$ $(2012)$ $4352-4372$. \label{7}
    \bibitem{EKY}  L. Eroh, C. X. Kang and E. Yi, On zero forcing number of graphs and their
    complements, Discrete Math. Algorithm. Appl., $07$ $(2015)$ DOI: $10.1142/S1793830915500020$.
    \bibitem{EKE} L. Eroh, C. X. Kang and E. Yi, A
    comparison between the metric dimension and zero forcing number
    of trees and unicylic graphs, arxiv:$1408.5943v1$ [math.Co]. \label{13}
    \bibitem{FH}  S. M. Fallat and L. Hogben, The minimum rank of symmetric matrices described by a graph: A
survey, Linear Algebra Appl., $426$ $(2007)$ $558-582$. \label{9}
    \bibitem{FHR}  S. M. Fallet and L. Hogben, Variants on the minimum rank problem: A survey
    II, arXiv:$1102.5142v1$ [math.Co]. \label{10}
     \bibitem{HM} F. Harary and R. A. Melter, On the metric dimension of a graph, Ars
     Combinatoria., $2(1976)191-195$. \label{17}
    \bibitem{HHKMWY}  L. Hogben, M. Huynh, N. Kingsley, S. Meyer, S. Walker and M. Young, Propagation time for
zero forcing on a graph, Discrete Appl. Math., $160$ $(2012)$
$1994-2005$. \label{11}
    \bibitem{S} P. J. Slater, Leaves of trees, Congr. Numer., $14$ $(1975)$ $549-559$.
     \bibitem{YKML} F. A. Taklimi, Zero forcing sets for graphs, Ph.D Thesis, University of
Regina, August $(2013)$.
\end{thebibliography}
\end{document}